%
\documentclass[12pt]{article}
\usepackage{amsmath,graphicx}
\usepackage{e-jc}
\specs{P1.18}{28(1)}{2021}{9550}



\usepackage{amssymb}
\usepackage{mathtools}
\usepackage{enumerate}

\usepackage{tikz}
\usepackage{color}
\usepackage{comment}


\theoremstyle{definition}

\newtheorem{conj}[theorem]{Conjecture}
\newtheorem{cor}[theorem]{Corollary}
\newtheorem{prop}[theorem]{Proposition}

\newtheorem*{main}{Main Theorem}

\newtheoremstyle{casestyle}
{3pt}
{3pt}
{}
{}
{\bfseries}
{:}
{.5em}
{}

\theoremstyle{casestyle}
\newtheorem{case}{Case}[theorem]

\newcommand{\cC}{\mathcal{C}}
\newcommand{\cG}{\mathcal{G}}

\newcommand{\Ind}{{\mathcal I}}

\newcommand{\N}{\mathbb{N}}

\newcommand{\of}{\subseteq}

\renewcommand{\k}{\widetilde{k}}

\newcommand{\wo}{\setminus}

\newcommand{\card}{\#}

\newcommand{\0}{\emptyset}

\renewcommand{\complement}{\overline}
\renewcommand{\d}{\delta}

\newcommand{\fl}{\phi}
\newcommand{\Gp}[1]{G_{#1}}

\DeclarePairedDelimiter\abs{|}{|}
\DeclarePairedDelimiter\parens{(}{)}
\DeclarePairedDelimiter\set{\{}{\}}

\DeclarePairedDelimiterX\setof[2]{\{}{\}}{#1\,:\,#2}

\DeclareMathOperator\CT{CT}
\DeclareMathOperator\ex{ex}
\DeclareMathOperator\mex{mex}



\dateline{May 20, 2020}{Jan 3, 2021}{Jan 29, 2021}

\MSC{05C30, 05C35, 05C69}

%
%
\Copyright{The authors. Released under the CC BY-ND license (International 4.0).}

\title{Many cliques with few edges}


\author{Rachel Kirsch\thanks{Supported in part by NSF grant DMS-1839918.}\\
\small Department of Mathematics\\[-0.8ex]
\small Iowa State University\\[-0.8ex] 
\small Ames, Iowa, U.S.A.\\
\small\tt kirsch@iastate.edu\\
\and
A.~J. Radcliffe\thanks{Supported in part by Simons Foundation grant number 429383.}\\
\small Department of Mathematics\\[-0.8ex]
\small University of Nebraska-Lincoln\\[-0.8ex]
\small Lincoln, Nebraska, U.S.A.\\
\small\tt jamie.radcliffe@unl.edu}

\begin{document}

\maketitle


\begin{abstract}
Recently Cutler and Radcliffe proved that the graph on $n$ vertices with maximum degree at most $r$ having the most cliques is a disjoint union of $\lfloor n/(r+1)\rfloor$ cliques of size $r+1$ together with a clique on the remainder of the vertices. It is very natural also to consider this question when the limiting resource is edges rather than vertices. In this paper we prove that among graphs with $m$ edges and maximum degree at most $r$, the graph that has the most cliques of size at least two is the disjoint union of $\bigl\lfloor m \bigm/\binom{r+1}{2} \bigr\rfloor$ cliques of size $r+1$ together with the colex graph using the remainder of the edges. 
\end{abstract}

\section{Introduction}

There has been a lot of recent work on the general problem of determining which graphs have the most cliques subject to natural ``resource limitations'' and additional constraints. Most of the early work focused on vertices as resources, and added other conditions. There are results about $n$-vertex graphs of maximum degree at most $r$. For instance, Cutler and the second author proved that the unique graph on $n$ vertices with maximum degree at most $r$ having the most cliques is $G = aK_{r+1}\cup K_b$, with $a$ as large as possible. Gan, Loh, and Sudakov \cite{GLS15} made substantial progress on the natural conjecture that the same graph has the largest number of cliques of any fixed size $t\ge 3$. Very recently Chase \cite{C19} proved this conjecture. 

Zykov \cite{Z49} proved that the Tur\'an graph has the largest number of $t$-cliques among all $n$-vertex graphs containing no $K_{r+1}$. There is a large literature on the very closely related problem of determining which $n$-vertex graphs have the largest number of independent sets \cite{EG14,G11}.

In this paper we consider results for which the resource is edges. We fix the number of edges in the graph, possibly impose other conditions, and ask which graph has the largest number of cliques. 

Since we are putting no constraint on the number of vertices, the simplest version of the problem is that of determining
\[
	\max\setof[\big]{\k(G)}{\text{$G$ has $m$ edges}},
\]
where we write $k_t(G)$ for the number of cliques in $G$ of size $t$ and
\[
	\k(G) = \sum_{t\ge 2} k_t(G)
\]
for the number of cliques in $G$ of size at least $2$. This question is straightforward to answer. The Kruskal-Katona Theorem \cite{K63,K68} easily shows that this maximum is achieved when $G=\cC(m)$, the colex graph having $m$ edges. This is the graph on vertex set $\N$ whose edges are the first $m$ in the colexicographic (colex) order on pairs.

\begin{prop}\label{prop:kkt} For $t \ge 2$, if a graph $G$ has $m$ edges, then $k_t(G) \le k_t(\cC(m))$. In particular, if $m = \binom{q}2$, then $k_t(G)\le \binom{q}t$.
\end{prop}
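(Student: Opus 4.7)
The plan is to deduce this directly from the Kruskal-Katona theorem. Let $G$ have $m$ edges and let $\mathcal{F}$ denote the family of $t$-cliques of $G$, viewed as a $t$-uniform set system on $V(G)$; thus $|\mathcal{F}| = k_t(G)$.

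The first observation is that $\bdytwo \mathcal{F} \of E(G)$: every pair of vertices contained in some $t$-clique of $G$ is already an edge. In particular $|\bdytwo \mathcal{F}| \le m$. The Kruskal-Katona theorem, in its equivalent ``upward'' form, says that among all $t$-uniform families whose iterated $2$-shadow has size at most $m$, the largest is the colex initial segment at the $t$-level, whose size is precisely $k_t(\cC(m))$. Combining the two bounds gives $k_t(G) \le k_t(\cC(m))$.

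To make the identification of the extremal family precise, I would verify that the $t$-cliques of $\cC(m)$ themselves realize the Kruskal-Katona extremum. A $t$-set $S$ is a clique of $\cC(m)$ exactly when each pair in $S$ lies in the colex initial segment of $m$ pairs, and the collection of $t$-sets with this property is itself an initial segment in the colex order at the $t$-level, with $2$-shadow equal to $E(\cC(m))$.

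The case $m = \binom{q}{2}$ is transparent: the first $\binom{q}{2}$ colex pairs are all pairs in $\{1,\dots,q\}$, so $\cC(m) = K_q$ and hence $k_t(\cC(m)) = \binom{q}{t}$, giving the ``in particular'' clause directly from the iterated Kruskal-Katona inequality. For general $m$ the only real obstacle is bookkeeping with the cascade representation $m = \binom{a_2}{2} + a_1$ with $a_2 > a_1 \ge 0$: one must check that the Kruskal-Katona bound equals $\binom{a_2}{t} + \binom{a_1}{t-1}$. Since $\cC(m)$ is in this case $K_{a_2}$ together with one extra vertex joined to $\{1,\dots,a_1\}$, that count can be read off directly, so no substantive difficulty arises beyond writing the indices down.
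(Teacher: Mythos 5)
Your proof is correct and takes the same route as the paper, which simply cites the Kruskal--Katona theorem and defers the details to \cite{KR19}: pass to the $t$-uniform family of $t$-cliques, observe its iterated $2$-shadow lies in $E(G)$, and invert Kruskal--Katona. One small inaccuracy in your verificatory paragraph: the $2$-shadow of the family of $t$-cliques of $\cC(m)$ need not \emph{equal} $E(\cC(m))$ (e.g.\ for $m=4$, $t=3$ the unique triangle of $\cC(4)$ has shadow $\{12,13,23\}\subsetneq E(\cC(4))$), but it is always a \emph{subset}, which is all the argument requires; to see that this colex initial segment is in fact the largest one with $2$-shadow of size at most $m$, one can note that the $2$-shadow of a colex initial segment at level $t$ is itself a colex initial segment at level $2$, so shadow size at most $m$ forces every member to be a clique of $\cC(m)$.
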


\begin{proof}
Immediate from the Kruskal-Katona Theorem. See \cite{KR19} for details.
\end{proof}

Radcliffe and Uzzell noted that it is a straightforward consequence of the ``rainbow Kruskal-Katona Theorem'' of Frankl, F\"{u}redi, and Kalai \cite{FFK88}, together with an important result of Frohmader \cite{Frohmader08} that 
\[
	\max\setof[\big]{\k(G)}{\text{$G$ has $m$ edges and contains no $K_{r+1}$}},
\]
is achieved by the $r$-partite colex-Tur\'an graph $\CT_r(m)$. For more details see \cite{RU18}.

\subsection{Main Result} 
\label{sub:main_result}

In this paper we consider the next most natural problem in this area. We determine
\[
	f(m, r) = \max\setof{\k(G)}{\text{$G$ has $m$ edges and $\Delta(G)\le r$}}.
\]
Indeed we show that the maximum is attained by a graph made up of as many copies of $K_{r+1}$ as it is possible to build with $m$ edges, together with an additional component (possibly empty) that is a colex graph on strictly fewer than $\binom{r+1}2$ edges. For convenience we give a name to the value of $\k$ of this graph. Defining $a$ and $b$ by $m = a\binom{r+1}2 + b$, $0\le b<\binom{r+1}2$ we let
\[
	g(m, r) = \k(aK_{r+1}\cup \cC(b)) = a(2^{r+1}-r-2)+\k(\cC(b)).
\]
It is possible to describe the last term more carefully. For any $0\le b<\binom{r+1}2$, there exist unique $c$ and $d$ defined by $b = \binom{c}{2} + d$, $0 \le d < c$. Moreover $\cC(b)$ consists of a clique of size $c$, together with another vertex joined to $d$ vertices of the clique. We have
\[
	\k(\cC(b)) = 2^c-c+2^d-2.
\]

Thus our main theorem is as follows.

\begin{main}
	For all $m,r\in \N$, write $m=a\binom{r+1}{2}+b$ with $0\le b < \binom{r+1}2$. If $G$ is a graph on $m$ edges with $\Delta(G)\leq r$, then 
	\begin{equation}
		\k(G) \leq \k(aK_{r+1} \cup \cC(b)),\label{eq:theone}
	\end{equation}
	with equality if and only if (disregarding isolated vertices) $G=aK_{r+1}\cup \cC(b)$ or $G=aK_{r+1}\cup K_c\cup K_2$ (where $b = \binom{c}{2}+1$).
\end{main}

In \cite{KR19}, we conjectured the following refinement of this main theorem.

\begin{conj}\label{conj:allt}For any $t \ge 3$, if $G$ is a graph with $m$ edges and maximum degree at most $r$, then 
\[
	k_t(G) \le k_t(aK_{r+1}\cup \cC(b)),
\]
where $m = a\binom{r+1}{2}+b$ and $0 \le b < \binom{r+1}{2}$.
\end{conj}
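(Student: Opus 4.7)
The plan is to prove Conjecture~\ref{conj:allt} by induction on $m$, mirroring the inductive architecture of the Main Theorem but tracking $k_t$ alone for a fixed $t\ge 3$ in place of $\k$. In the base case $m < \binom{r+1}{2}$ we have $a=0$, and the required inequality $k_t(G) \le k_t(\cC(m))$ is precisely Proposition~\ref{prop:kkt}.

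For the inductive step, assume $m \ge \binom{r+1}{2}$. The key structural observation is that whenever $\Delta(G)\le r$, any $K_{r+1}$ subgraph in $G$ is automatically a connected component, since each of its $r+1$ vertices already has $r$ neighbors inside it, leaving no capacity for external edges. This splits the argument into two cases. In \textbf{Case A}, $G$ has a $K_{r+1}$ component, so $G = K_{r+1} \sqcup G'$ with $\Delta(G')\le r$ and $e(G') = (a-1)\binom{r+1}{2}+b$; the inductive hypothesis applied to $G'$ together with $k_t(K_{r+1}) = \binom{r+1}{t}$ yields the bound immediately. In \textbf{Case B}, $G$ is $K_{r+1}$-free, and this is where the conjectural difficulty concentrates.

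For Case~B, I would begin with the double count
\[
    k_t(G) \;=\; \frac{1}{\binom{t}{2}}\sum_{uv\in E(G)} k_{t-2}\bigl(G[N(u)\cap N(v)]\bigr),
\]
where each induced neighborhood has at most $r-1$ vertices (by $\Delta(G)\le r$) and is $K_{r-1}$-free (else together with $u,v$ it would yield a $K_{r+1}$). Zykov's theorem caps $k_{t-2}$ on each such subgraph by its value on the $(r-2)$-partite Tur\'an graph, and one would combine this with a refined accounting that separates edges lying in a common $K_r$ from ``loose'' edges, paralleling the structure of $\cC(b)$ as a $K_c$ together with a single extra vertex joined to $d$ of its vertices.

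The main obstacle is precisely Case~B: the plain edge-weight bound yields $k_t(G) \le m\binom{r+1}{t}/\binom{r+1}{2}$, which is tight on disjoint unions of $K_{r+1}$'s but overshoots the target $a\binom{r+1}{t}+k_t(\cC(b))$ by $b\binom{r+1}{t}/\binom{r+1}{2} - k_t(\cC(b))$, a quantity one checks to be strictly positive (already at $r=t=3$ with any $b\in\{1,\dots,5\}$). Closing this gap is the genuinely new content of the conjecture beyond the Main Theorem, and it requires exploiting $K_{r+1}$-freeness globally rather than only through the local $K_{r-1}$-free neighborhoods. A plausible route is via Frohmader's theorem on the flag $f$-vector of the clique complex of $G$, which reduces the maximum of $k_t$ over $K_{r+1}$-free graphs with $m$ edges to the colex-Tur\'an graph $\CT_r(m)$ (cf.~\cite{RU18}); one would then verify the arithmetic inequality $k_t(\CT_r(m)) \le a\binom{r+1}{t}+k_t(\cC(b))$ and, crucially, coordinate this with the $\Delta\le r$ constraint, which $\CT_r(m)$ does not in general satisfy and whose incorporation appears to be the delicate technical heart of the conjecture.
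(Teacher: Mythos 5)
This statement is a \emph{conjecture} (Conjecture~\ref{conj:allt}); the paper offers no proof of it. The authors proved only the case $t=3$, $r\le 8$ in earlier work (Theorem~\ref{thm:kr19}, citing \cite{KR19}), and they note in Section~\ref{sec:conclusion} that Chakraborti and Chen \cite{CC20} subsequently announced a proof of the full conjecture. So there is no ``paper's own proof'' to compare against.

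That said, your write-up is a correct diagnosis of the difficulty rather than a proof, as you yourself acknowledge. The base case via Kruskal--Katona (Proposition~\ref{prop:kkt}), the observation that any $K_{r+1}$ subgraph of a graph with $\Delta\le r$ is automatically a component, and the resulting reduction in Case~A are all correct and mirror the inductive skeleton of the paper's Main Theorem. Your Case~B discussion also correctly pinpoints why the naive edge-averaging bound $k_t(G)\le m\binom{r+1}{t}\big/\binom{r+1}{2}$ overshoots the target whenever $b>0$.

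The gap you leave open is genuine, and the route you tentatively sketch does not close it. Frohmader's theorem reduces the $K_{r+1}$-free problem to $\CT_r(m)$, but $\CT_r(m)$ does not in general satisfy $\Delta\le r$, so passing to it discards the degree constraint that makes the conjecture nontrivial in the first place; a reduction to an object where the constraint is lost cannot prove a bound that depends on that constraint. The machinery the paper actually uses for the Main Theorem --- colex folding (Theorem~\ref{thm:colexfold}), partial folding (Lemma~\ref{lem:oneedge}), and the cluster averaging of Section~\ref{sec:averaging} --- is tuned to the aggregate count $\k(G)=\sum_{t\ge2}k_t(G)$, where the $1/\binom{t}{2}$ weighting and comparison against $g(m,r)$ give slack not available for a single fixed $t$; that is precisely why the fixed-$t$ statement remained a conjecture here. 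Your proposal reproduces the easy reductions but supplies no new leverage on Case~B.
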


We proved this conjecture when $t=3$ and $r \le 8$:
\begin{theorem}[\cite{KR19}]\label{thm:kr19}For any $r \le 8$, if $G$ is a graph with $m$ edges and maximum degree at most $r$, then 
\[
	k_3(G) \le k_3(aK_{r+1}\cup \cC(b)),
\]
where $m = a\binom{r+1}{2}+b$ and $0 \le b < \binom{r+1}{2}$.
\end{theorem}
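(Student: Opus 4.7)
The plan is strong induction on $m$, combined with a per-vertex bound on triangles and a small-case check. The base case $m < \binom{r+1}{2}$ (so $a = 0$) is immediate: the graph $\cC(b) = \cC(m)$ has maximum degree at most $r$, so Proposition~\ref{prop:kkt} gives $k_3(G) \le k_3(\cC(m))$ directly.

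For the inductive step, the key structural observation is that any $K_{r+1}$-subgraph of $G$ must be an entire connected component: each of its vertices already has degree $r$, so no further edges are possible. Thus if $G$ contains $K_{r+1}$, write $G = K_{r+1}\cup G'$ where $G'$ has $m - \binom{r+1}{2}$ edges and $\Delta(G') \le r$; the inductive hypothesis applied to $G'$ with parameters $(a-1, b)$ gives $k_3(G') \le (a-1)\binom{r+1}{3} + k_3(\cC(b))$, hence the desired bound for $G$. So we may assume $G$ is $K_{r+1}$-free throughout.

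In the $K_{r+1}$-free case I would use the identity $3k_3(G) = \sum_v e(G[N(v)])$, counting each triangle once at each of its three vertices as an edge in the corresponding neighborhood subgraph. Since $G$ is $K_{r+1}$-free, each $G[N(v)]$ is a $K_r$-free graph on $d := \deg(v) \le r$ vertices, so $e(G[N(v)]) \le \binom{d}{2}$ in general and $e(G[N(v)]) \le \binom{r}{2} - 1$ when $d = r$. Summing yields $3k_3(G) \le \sum_{v \,:\, \deg(v) < r} \binom{\deg(v)}{2} + s_r\bigl(\binom{r}{2}-1\bigr)$, where $s_r$ is the number of $r$-valent vertices. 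This right-hand side is then compared to $3a\binom{r+1}{3} + 3k_3(\cC(b))$ using $\sum_v \deg(v) = 2m$, the convexity of $\binom{d}{2}$, and structural constraints tying $s_r$ to $K_{r+1}$-freeness.

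The main obstacle is precisely this last comparison: for certain residues $b$ the single sharpening $e(G[N(v)]) \le \binom{r}{2} - 1$ at $r$-valent vertices is not sufficient on its own, and one must extract additional information from the $K_r$-free structure of $G[N(v)]$---for instance by applying Kruskal--Katona locally to $G[N(v)]$, or by performing a swap/compression that moves $G$ toward $aK_{r+1} \cup \cC(b)$ without losing triangles. For $r \le 8$ there are only finitely many residues $b$ to examine, and each closes by elementary combinatorial arguments together with graphicality restrictions on the degree sequence; this finiteness is where the $r \le 8$ restriction enters, and it is exactly the gap that Conjecture~\ref{conj:allt} seeks to close. The equality analysis at the end tracks when each inequality in the chain is tight, and so isolates the two extremal families $aK_{r+1}\cup\cC(b)$ and $aK_{r+1}\cup K_c\cup K_2$ in the statement.
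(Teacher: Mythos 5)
The paper does not actually prove Theorem~\ref{thm:kr19}; it cites it from~\cite{KR19}, and from the references here you can see that the argument there runs through the cluster/folding machinery (folding, partial folding, averaging over edges) rather than a degree-sequence count. Your proposal takes a different route, and its reduction steps (Kruskal--Katona for $a=0$; a $K_{r+1}$ must be a whole component) are fine. But the step you flag at the end is a genuine, unclosed gap, and it is not just a matter of tidying finitely many residues.

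Concretely, take $r=5$, $a=1$, $b=5$, so $m=20$. The target is $k_3(G)\le \binom{6}{3}+k_3(\cC(5))=20+2=22$ for $K_6$-free $G$ with $\Delta\le 5$. Your bound $3k_3(G)=\sum_v e(G[N(v)])\le \sum_v \min\bigl(\binom{\deg v}{2},\binom{5}{2}-1\bigr)$, maximized over degree sequences with $\sum_v\deg v=40$ and $\deg v\le 5$, equals $8\cdot 9=72$ (all eight vertices of degree $5$, a graphical sequence), giving only $k_3(G)\le 24$. So the per-vertex sharpening $e(G[N(v)])\le\binom{r}{2}-1$ is strictly too weak even after optimizing the degree sequence, and the deficit is $2$, not a rounding artifact. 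The two repairs you gesture at do not obviously help: Kruskal--Katona applied inside $G[N(v)]$ bounds cliques of $G[N(v)]$ by its edges, which is the wrong direction when you are already trying to bound $e(G[N(v)])$ itself, and $K_r$-freeness of $G[N(v)]$ on $r$ vertices gives nothing beyond $\binom{r}{2}-1$ since $K_r$ minus an edge is extremal. Your proposed ``swap/compression that moves $G$ toward $aK_{r+1}\cup\cC(b)$'' is in effect the folding operation of~\cite{CR14} and~\cite{KR19}; it is left entirely undeveloped, and it --- not the degree count --- is where the real work is. Finally, the explanation offered for the hypothesis $r\le 8$ is not right: for every fixed $r$ there are only finitely many residues $b$, so ``finitely many $b$ to check'' is not what distinguishes $r\le 8$, and Conjecture~\ref{conj:allt} is about all $t\ge 3$ (not about extending $t=3$ to larger $r$).
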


Our proof of the main theorem uses several ``local moves'': alterations to a potentially optimal graph demonstrating that a closely related graph would have a strictly larger value of $\k$. We handle graphs to which no local move applies by a global averaging argument. 

Our initial analysis of the structure of a potentially optimal graph is to consider \emph{tight cliques}. A clique, of size $t$ say, is \emph{tight} if its vertices have $r+1-t$ common neighbors. A maximal tight clique is called a \emph{cluster}. We expect that optimal graphs will have many tight cliques. Note that it is impossible to have a cluster of size $r$ since by definition such a tight clique would be contained in a $K_{r+1}$ component, i.e. a cluster of size $r+1$. If a cluster of size $r+1$ exists, we will be able to apply induction on $m$. Also, if $G \in \cG(m,r)$ is connected and $m > \binom{r+1}{2}$, then all clusters in $G$ have size at most $r-1$. Much of our proof involves considering the possible structures and relationships of clusters. 

In Section~\ref{sec:clusters} we outline the basic properties of clusters. In Section~\ref{sec:local_moves} we discuss the various local alterations we will attempt to do on a potentially optimal connected graph.
All are variants of an operation we call \emph{folding}, which was introduced in a slightly different context in \cite{CR14} and also used in \cite{CR2016} and \cite{KR19}.
If we can establish that the folded graph has no more edges and no higher maximum degree than before, and $\k$ has strictly increased, then we can eliminate $G$ as a potentially optimal graph. In Section \ref{sec:averaging} we use a global averaging argument to show that the remaining connected graphs are not extremal, and in Section~\ref{sec:proof} we prove our Main Theorem. We finish in Section~\ref{sec:conclusion} by describing some open problems in the area.


\section{Clusters} 
\label{sec:clusters}

Let us begin by establishing some notation. Most of our notation is standard (see, for instance, \cite{BB}). We will write $N[v]$ for the closed neighborhood of a vertex $v$. If $e$ is an edge of $G$ then we let $k_t(e)$ denote the number of $t$-cliques of $G$ containing $e$. Most of the graphs we consider will have $m$ edges and maximum degree at most $r$. We write $\cG(m,r)$ for the set of such graphs and $\cG_C(m,r)$ for the set of connected graphs in $\cG(m,r)$.

One way to think about counting the number of cliques in a graph $G$ containing $m$ edges, is to count how the $3$-cliques are made up of $2$-cliques, how the $4$-cliques are made up of $3$-cliques, and so on. Clearly each $t$-clique contains $t$ copies of $K_{t-1}$, and equally easily each $t$-clique $K$ is contained in $\abs{N(K)}$ copies of $K_{t+1}$, where $N(K)$ is the set of common neighbors of $K$. Thus, for $G$ with maximum degree at most $r$ to have many cliques it seems likely that many of its cliques must have as large a common neighborhood as possible. This prompts the following definitions.

\begin{definition}
	If $G$ is a graph of maximum degree at most $r$ and $T\of V(G)$ is a clique satisfying $\abs{N(T)} = r+1-\abs{T}$, then we say that $T$ is \emph{tight}. If $T$ is a maximal tight clique, then we say it is a \emph{cluster}.
\end{definition}

For a cluster $T$ we can analyze the graph locally as consisting of $T$, its neighbors, and connections to the rest of the graph. The following definition establishes our notation and conventions. 

\begin{definition}
	Suppose that $G \in \cG_C(m,r)$ is a graph with a cluster $T\of V(G)$. We let $S_T = N(T)$ and let $B_T$ be the graph of edges $uv$ such that $u \in S_T$ and $v \in V(G)\setminus(T\cup S_T)$. We define
	\[
		R_T = \complement{G[S_T]},
	\]
i.e., the graph on $S_T$ whose edges are those not in $G$. We think of the missing edges $R_T$ as \emph{red edges}, and the edges $B_T$ from the cluster to the rest of the graph as \emph{blue edges}. See Figure \ref{fig:cluster}. When the risk of confusion is low we will simply refer to $S_T$, $R_T$, and $B_T$ as $S$, $R$, and $B$. Note that since $G$ has maximum degree at most $r$ we know that for all vertices $x\in S$ we have $d_B(x) \le d_R(x)$.
\end{definition}

\begin{figure}[ht]
\begin{center}
    \tikzstyle{vx}=[inner sep=1.5pt,circle,fill=black,draw=black]
    \tikzstyle{R edge}=[red,thick]
    \tikzstyle{B edge}=[blue,thick]
    \tikzstyle{edge}=[thick]
    
	\begin{tikzpicture}
	    \clip (-0.2,-0.2) rectangle (5.7,4.2);
	    \clip (2.5,2) ellipse [x radius=3, y radius=2.5];
        \draw (1,2) ellipse [x radius=0.75, y radius=1.5];
        \node at (1,1) {$T$};
        \draw[step=0.15, thick, rotate around={20:(1,2.8)}] (.9,2.7) grid +(0.449,0.449);  %
        \draw (4,2) ellipse [x radius=1, y radius=2];
        \node at (4,0.5) {$S$};
        \draw[thick] (2,2.25) -- (2.75,2.25) -- (2,1.75) -- (2.75,1.75);
        \foreach \x/\y [count=\i, evaluate=\y as \righty using 6*\y-13] in {3.5/1.5, 4.3/1.8, 4/2.2, 4.6/2.5, 3.5/2.6, 4.1/3.2}
        {
            \node[vx] (V\i) at (\x,\y) {};
            \coordinate (U\i) at (6,\righty);
        }
        \foreach \i/\j in {1/3, 2/3, 2/4, 3/4, 3/5, 3/6, 4/5, 4/6}
        {
            \draw[B edge] (V\i) to[bend right] (U\j);
            \draw[B edge] (V\j) to[bend left] (U\i);
            \draw[R edge] (V\i) -- (V\j);
        }
    \end{tikzpicture}
	\caption{A cluster and its neighborhood}\label{fig:cluster}
\end{center}
\end{figure}
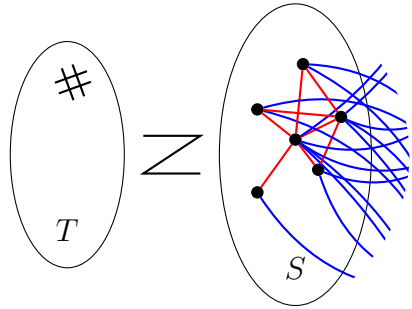

\begin{lemma}\label{lem:clustructure}
	Let $G$ be a graph with maximum degree at most $r$.\begin{enumerate}
	\item Two vertices $x,y\in V(G)$ of degree $r$ belong to some common tight clique precisely if $N[x]=N[y]$. In particular the relation of belonging to some common tight clique is an equivalence relation on vertices of degree $r$. The clusters of $G$ are disjoint and are the equivalence classes.
	\item An edge can be incident to at most two clusters.
	\item An edge $e$ that is not tight (equivalently, that is not in a cluster) has $k_t(e) \le \binom{r-2}{t-2}$ for every $t \ge 2$.
	\end{enumerate}
\end{lemma}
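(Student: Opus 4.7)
I would prove the three parts in order, extracting a single ``saturation observation'' at the outset and then chaining consequences. The observation: if $T$ is a tight clique of size $t$ and $v\in T$, then $v$ is already adjacent to the $t-1$ other vertices of $T$ and all $r+1-t$ vertices of $N(T)$, which saturates the degree budget $d(v)\le r$. Hence $d(v)=r$ and $N[v]=T\cup N(T)$. Virtually every step of the lemma is a one-line consequence of this fact.

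Part (1) is the heart. The forward direction of the ``iff'' is immediate: two degree-$r$ vertices $x,y$ in a common tight clique $T$ both satisfy $N[x]=T\cup N(T)=N[y]$. For the converse, if $N[x]=N[y]$ with $x\ne y$ then $xy$ is an edge, and a short count shows $\{x,y\}$ has exactly $r-1$ common neighbors, so $\{x,y\}$ is a tight $2$-clique. This establishes the ``iff'' and, since $N[\cdot]=N[\cdot]$ is manifestly an equivalence relation, gives the equivalence-relation assertion. To identify equivalence classes with clusters, I would take a class $X$ with common closed neighborhood $S$; every pair in $X$ is adjacent, so $X$ is a clique, and $N(X)=S\setminus X$ makes $X$ tight of the right size. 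If some $y\notin X$ extended $X$ to a tight clique, the saturation observation applied to $y$ would force $N[y]=S$, putting $y\in X$, a contradiction, so $X$ is a cluster. Conversely any cluster lies in a single class (apply saturation to each of its vertices), and maximality forces equality. Disjointness of clusters then follows from disjointness of the equivalence classes.

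Parts (2) and (3) fall out quickly. Part (2) is immediate: each vertex belongs to at most one cluster by part (1), so an edge, with its two endpoints, is incident to at most two clusters. For part (3), if $e=xy$ is a non-tight edge then $|N(x)\cap N(y)|\le|N(x)\setminus\{y\}|\le r-1$, and equality would make $\{x,y\}$ itself a tight clique, so in fact $|N(x)\cap N(y)|\le r-2$. Since $t$-cliques through $e$ biject with $(t-2)$-cliques in $G[N(x)\cap N(y)]$, the bound $k_t(e)\le\binom{r-2}{t-2}$ follows. The parenthetical equivalence in part (3) (``not tight'' vs.\ ``not in a cluster'') is another direct consequence of the saturation observation: any tight clique containing $\{x,y\}$ forces $N[x]$ and $N[y]$ to contain $r-1$ common neighbors, making $\{x,y\}$ itself tight.

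The main obstacle, such as it is, lies in the bookkeeping for part (1): aligning maximality of a tight clique with maximality of an equivalence class, and not overlooking the singleton-cluster edge case (a degree-$r$ vertex whose equivalence class is just itself, realized as the tight $1$-clique $\{v\}$ with $|N(\{v\})|=r$). Once the saturation observation is isolated and this correspondence is verified, parts (2) and (3) are essentially one-liners.
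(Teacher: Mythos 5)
Your proof is correct and complete; the saturation observation (every vertex of a tight clique has degree exactly $r$, with closed neighborhood $T\cup N(T)$) is exactly the right organizing principle, and the derivations of (1)--(3) from it, including the singleton-cluster edge case, all check out. The paper itself declares this lemma ``Immediate'' and gives no proof, so there is no alternative route to compare against; your argument simply makes explicit the reasoning the authors left to the reader.
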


\begin{proof}
	Immediate.
\end{proof}


\section{Local Moves} 
\label{sec:local_moves}

In this section we define three ways to alter a potentially extremal connected graph $G$ when it has a cluster $T$ with certain properties. We call these local moves \emph{folding}, \emph{colex folding}, and \emph{partial folding}. We use colex folding (Subsection \ref{subsec:colex}) and partial folding (Subsection \ref{subsec:partial}) to show that graphs with certain types of clusters are not extremal.

\subsection{Folding}\label{subsec:folding}

As in \cite{CR14}, \cite{CR2016}, and \cite{KR19}, we define the operation of folding as follows.

\begin{definition}
	Suppose that $G \in \cG_C(m,r)$ and $T\of V(G)$ is a cluster. The \emph{folding of $G$ at $T$} is the new graph $G_T$ formed by converting $T\cup S$ into a clique (of size $r+1$) and deleting all the blue edges. In other words we define
	\[
		\Gp{T} = G + \binom{S}{2} - E(B).
	\]
\end{definition}

The folded graph $G_T$ also has maximum degree at most $r$ since each vertex in $T\cup S$ now has degree equal to $r$, and the degree of each other vertex has weakly decreased. Nevertheless, for the clusters $T$ that have $e(B) < e(R)$, we have $e(G_T) > e(G)$, so a comparison to $G_T$ cannot determine whether $G$ is extremal. In contrast, when $e(B) \ge e(R)$, we have $G_T\cup (e(G)-e(G_T))K_2 \in \cG(m,r)$. Thus we consider folding only when $e(B) \ge e(R)$. We define \emph{unfoldable} clusters as those with $\k(G) \ge \k(G_T)$ (whether or not $e(G_T) \le e(G)$). We use unfoldability in Subsection \ref{subsec:FL}.

\subsection{Colex Folding}\label{subsec:colex}

Colex folding is a natural variant of folding to consider when $e(B) < e(R)$.

\begin{definition}
	Suppose that $G\in \cG_C(m,r)$ and $T\of V(G)$ is a cluster. The \emph{colex folding} of $G$ at $T$ is the new graph $G'_T$ formed by deleting $T\cup S$ and adding a colex graph with the appropriate number of edges. In other words we define
	\[
		G'_T = G[V(G)\setminus(T\cup S)] \cup \cC\parens[\Big]{\binom{r+1}{2}-e(R)+e(B)}.
	\]
\end{definition}

By construction $e(G'_T) = e(G)$. When $e(B) < e(R)$, the number of edges in the new colex component is less than $\binom{r+1}{2}$, so no vertex will have degree greater than $r$, and $G'_T \in \cG(m,r)$. We will show that if some cluster $T$ in $G$ has $e(B) < e(R) \le r$, then $\k(G) \le \k(G'_T)$. In going from $G$ to $G'_T$ we lose all cliques that contain a blue edge, but we will show that we get a net gain from cliques we create in the colex component.

We call cliques containing a blue edge \emph{blue cliques}. We start by proving a series of statements bounding the number of blue cliques.

\begin{prop}\label{prop:bluetbound}For any cluster $T$ in a graph $G \in \cG_C(m,r)$ and any $t \ge 2$, the number of blue $K_t$'s is at most $\binom{e(B)}{t-1}$.\end{prop}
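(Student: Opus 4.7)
The plan is to exhibit an injection from the set of blue $K_t$'s into the set of $(t-1)$-subsets of $E(B)$. The key structural observation is that a blue $K_t$ cannot include any vertex of $T$: every vertex of the cluster $T$ has closed neighborhood $T \cup S$, but a blue edge has an endpoint in $V(G)\setminus(T\cup S)$, so including any $T$-vertex would contradict the clique property. Consequently, the vertex set of any blue $K_t$ splits as $U \cup W$ where $U \subseteq S$ has size $s$ and $W \subseteq V(G) \setminus (T\cup S)$ has size $o$, with $s + o = t$ and $s,o \ge 1$ (else no blue edge would be present).

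Because $U \cup W$ spans a clique in $G$, every one of the $so$ edges between $U$ and $W$ is present in $G$, and each such edge is blue by definition. Thus the blue edges contained in our $K_t$ form a copy of the complete bipartite graph $K_{s,o}$. I would now fix (once and for all) a linear order on $E(B)$, and assign to each blue $K_t$ the lexicographically least spanning tree of this $K_{s,o}$; such a tree uses exactly $s + o - 1 = t - 1$ blue edges.

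This assignment $\phi$ is injective into $\binom{E(B)}{t-1}$: from $\phi(K)$ alone we can recover the vertex set of $K$ as the vertex set of the spanning tree, and a blue $K_t$ is determined by its vertex set. Hence the number of blue $K_t$'s is at most $\binom{e(B)}{t-1}$, as desired. The small subtlety to spell out is that the canonical spanning tree is well-defined (the bipartite complete graph $K_{s,o}$ certainly has at least one spanning tree, namely any, so picking the lex-least is unambiguous), and that the cases $s=0$ or $o=0$ are correctly excluded by the assumption that our clique is blue. Beyond this the argument is essentially mechanical, with no heavy lifting required.
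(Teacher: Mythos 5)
Your proof is correct and takes essentially the same route as the paper's: observe that the blue edges of a blue $K_t$ form a spanning connected (complete bipartite) subgraph, hence the clique has a blue spanning tree on $t-1$ blue edges, and this gives an injection into $\binom{E(B)}{t-1}$. You merely flesh out details the paper leaves implicit (that a blue clique avoids $T$, the choice of a canonical spanning tree, and why the map is injective).
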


\begin{proof}
Given a $t$-clique $C$ that contains a blue edge, its blue edges form a spanning and connected (indeed, complete bipartite) subgraph of $C$, so $C$ has a blue spanning tree. This map from blue $K_t$'s to $(t-1)$-sets of blue edges is an injection.
\end{proof}

\begin{lemma}\label{lem:numt}
For $u, q \in \N$ such that $\binom{q}{2} \le u$ and $t\ge 2$,
\[
	k_t(\cC(u)) + \binom{q}{t-1} \le k_t(\cC(u+q)).
\]
\end{lemma}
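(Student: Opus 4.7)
The plan is to reduce the inequality to an elementary binomial identity by computing both sides explicitly. Write $u = \binom{p}{2} + s$ with $0 \le s < p$, so that $\cC(u)$ consists of a clique $K_p$ on $[p]$ together with one extra vertex $p+1$ adjacent to $[s]$. Counting $t$-cliques by whether or not they contain $p+1$ gives
\[
k_t(\cC(u)) = \binom{p}{t} + \binom{s}{t-1}.
\]
The hypothesis $\binom{q}{2} \le u$ forces $q \le p$ (since $\binom{p+1}{2} = \binom{p}{2} + p > u$).

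The first step is to split into two cases depending on whether the extra $q$ edges all go into the ``fan'' from $p+1$ or instead complete a $K_{p+1}$ and start a new fan from $p+2$. If $s + q < p$, then $\cC(u+q)$ has the same shape as $\cC(u)$ with $s$ replaced by $s+q$, so the increment in $k_t$ is exactly $\binom{s+q}{t-1} - \binom{s}{t-1}$; Vandermonde's identity
\[
\binom{s+q}{t-1} = \sum_{i=0}^{t-1}\binom{s}{t-1-i}\binom{q}{i}
\]
bounds this below by $\binom{s}{t-1} + \binom{q}{t-1}$, giving the desired inequality.

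In the remaining case $s + q \ge p$, set $s' = s + q - p$, and note $0 \le s' < q$. Then $\cC(u+q)$ is $K_{p+1}$ on $[p+1]$ plus a vertex $p+2$ joined to $[s']$, so
\[
k_t(\cC(u+q)) - k_t(\cC(u)) = \binom{p}{t-1} + \binom{s'}{t-1} - \binom{s}{t-1}.
\]
The desired inequality is $\binom{p}{t-1} - \binom{q}{t-1} \ge \binom{s}{t-1} - \binom{s'}{t-1}$. The hockey-stick identity rewrites this as
\[
\sum_{i=q}^{p-1}\binom{i}{t-2} \ge \sum_{i=s'}^{s-1}\binom{i}{t-2}.
\]
Both sums have $p - q = s - s'$ terms, and since $q \ge s'$ (equivalently $s < p$), the $j$th term on the left, $\binom{q+j}{t-2}$, is at least the $j$th term on the right, $\binom{s'+j}{t-2}$, by monotonicity of $\binom{\cdot}{t-2}$. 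A termwise comparison then closes the case.

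The main obstacle is largely bookkeeping: keeping the case split clean and lining the two binomial sums up so that they have the same length. Once $\cC(u)$ and $\cC(u+q)$ are explicitly described in each regime, both inequalities reduce to standard identities (Vandermonde and hockey-stick).
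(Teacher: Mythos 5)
Your proof is correct, and it is essentially the same argument as the paper's: both reduce to showing $k_t(\cC(u+q)) - k_t(\cC(u)) \ge \binom{q}{t-1}$ by writing out the colex graphs explicitly and splitting into the same two cases (the new $q$ edges stay in the fan vs.\ spill into a new vertex), with Case~1 handled by a direct count and Case~2 by a hockey-stick termwise comparison. (The only difference is cosmetic: the paper first uses Pascal to rephrase the inequality as $k_t(K_{q+1}\cup\cC(u)) \le k_t(K_q\cup\cC(u+q))$ before doing the same computation, and in Case~1 exhibits the $\binom{q}{t-1}$ new $K_t$'s directly rather than via Vandermonde; also note the minor slip that $q\ge s'$ is equivalent to $s\le p$, not $s<p$, though $q>s'$ does hold and suffices.)
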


\begin{proof}
By Pascal's identity, this inequality is equivalent to $k_t(G_1) \le k_t(G_2)$, where $G_1 = K_{q+1} \cup \cC(u)$ and $G_2 = K_q \cup \cC(u+q)$. We can change $G_1$ to $G_2$ by deleting one vertex of the $K_{q+1}$ and adding its $q$ edges to the colex component.

In $K_{q+1}$ there are $\binom{q}{t-1}$ $K_t$'s that contain a given vertex; these are the $K_t$'s lost. We show that at least $\binom{q}{t-1}$ $K_t$'s are gained by considering two cases based on $c$ and $d$ defined by $u = \binom{c}{2} + d$, $0 \le d < c$.

\begin{case}$q \le c-d$
\end{case}

In this case, the $q$ new edges form a star in the colex component. They contribute $\binom{q}{t-1}$ new $K_t$'s from this star alone.

\begin{case}$q > c-d$
\end{case}

In this case, the $q$ new edges do not form a star in the colex component; they complete the next largest clique and then add a new vertex. The assumption that $\binom{q}{2} \le u$ implies that $q \le c$, so only one new vertex is added. Let $q_1 = c-d$ (the number of edges to complete the next clique) and $q_2 =q-q_1 = q-c+d$ (the degree of the new vertex). Note $q_2 \le d$. In completing the first clique we add $\binom{d}{t-2} + \binom{d+1}{t-2} + \cdots + \binom{c-1}{t-2}$ $K_t$'s. Then the edges incident to the new vertex add $\binom{0}{t-2} + \binom{1}{t-2} + \cdots + \binom{q_2-1}{t-2}$ $K_t$'s. The total number of new $K_t$'s is
\[\sum_{i=0}^{q_2-1} \binom{i}{t-2} + \sum_{i=d}^{c-1} \binom{i}{t-2} \ge \sum_{i=0}^{q_2-d+c-1} \binom{i}{t-2} = \sum_{i=0}^{q-1} \binom{i}{t-2} = \binom{q}{t-1}.\qedhere
\]
\end{proof}

\begin{theorem}\label{thm:colexfold}Suppose that $G \in \cG_C(m,r)$ and $T$ is a cluster in $G$ with $e(B) < e(R) \le r$. Let $G'_T$ be the colex folding of $G$ at $T$. Then $G'_T \in \cG(m,r)$ and, for every $t \ge 2$, $k_t(G'_T) \ge k_t(G)$.
\end{theorem}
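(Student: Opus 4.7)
The plan is to verify first that $G'_T \in \cG(m,r)$, then to decompose the $t$-cliques of $G$ and $G'_T$ according to whether they meet $T \cup S$, and finally to compare the resulting pieces using Proposition \ref{prop:kkt}, Proposition \ref{prop:bluetbound}, and Lemma \ref{lem:numt}. For the first task, note that by construction $e(G'_T) = e(G)$: the $\binom{r+1}{2}-e(R)$ edges of $G[T\cup S]$ together with the $e(B)$ blue edges are deleted and replaced by a colex graph on exactly $\binom{r+1}{2}-e(R)+e(B)$ edges. The maximum-degree condition holds because vertices outside $T \cup S$ only lose (blue) neighbors, while the new colex component has fewer than $\binom{r+1}{2}$ edges (since $e(B) < e(R)$) and hence has maximum degree at most $r$.

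For the main clique comparison, I would partition the $t$-cliques of $G$ into three classes: those disjoint from $T \cup S$, those contained in $T \cup S$, and the \emph{blue} $t$-cliques, i.e.\ those meeting both $S$ and $V(G) \setminus (T \cup S)$. (A clique meeting $T$ is automatically contained in $T \cup S$, since $N(T)=S$ and $T$ is a clique.) Writing $H = G[V(G) \setminus (T \cup S)]$, which is unchanged in $G'_T$, and letting $b_t(G)$ denote the number of blue $t$-cliques, this gives
\[
    k_t(G) = k_t(H) + k_t(G[T\cup S]) + b_t(G),
\]
while
\[
    k_t(G'_T) = k_t(H) + k_t\parens[\big]{\cC(u)}, \qquad u = \tbinom{r+1}{2}-e(R)+e(B).
\]
So it suffices to prove $k_t(G[T\cup S]) + b_t(G) \le k_t(\cC(u))$.

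Set $v = \binom{r+1}{2}-e(R) = e(G[T\cup S])$, so that $u = v + e(B)$. By Proposition \ref{prop:kkt}, $k_t(G[T\cup S]) \le k_t(\cC(v))$, and by Proposition \ref{prop:bluetbound}, $b_t(G) \le \binom{e(B)}{t-1}$. Lemma \ref{lem:numt}, applied with $q = e(B)$, then yields $k_t(\cC(v)) + \binom{e(B)}{t-1} \le k_t(\cC(v + e(B))) = k_t(\cC(u))$, which completes the argument. The main (and essentially only) subtle point — and the place where one must use both halves of the hypothesis $e(B) < e(R) \le r$ — is verifying Lemma \ref{lem:numt}'s hypothesis $\binom{q}{2} \le v$: from $e(B) \le e(R)-1 \le r-1$ one has $\binom{e(B)}{2} \le \binom{r-1}{2} \le \binom{r+1}{2} - r \le \binom{r+1}{2} - e(R) = v$, as required.
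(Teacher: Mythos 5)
Your proof is correct and follows essentially the same approach as the paper's: both decompose the $t$-cliques of $G$ into those in $G[T\cup S]$, blue cliques, and those in $G - (T\cup S)$, then apply Proposition~\ref{prop:kkt}, Proposition~\ref{prop:bluetbound}, and Lemma~\ref{lem:numt} (with the same choice $u = \binom{r+1}{2}-e(R)$, $q=e(B)$) and verify $\binom{e(B)}{2} \le \binom{r+1}{2}-e(R)$ in the same way. The only cosmetic differences are that you spell out the justification for $G'_T \in \cG(m,r)$ and note explicitly that a clique meeting $T$ must lie in $T\cup S$, both of which the paper leaves implicit.
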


\begin{proof}
First, $G'_T \in \cG(m,r)$ by definition since $e(B) < e(R)$. We will apply Lemma \ref{lem:numt} with $u=\binom{r+1}{2}-e(R)$ and $q=e(B)$. We have $e(B) < e(R) \le r$. Therefore
\[
	\binom{e(B)}{2} \le \binom{r}2 = \binom{r+1}{2}-r \le \binom{r+1}{2}-e(R),
\]
and so Lemma \ref{lem:numt} implies $k_t(\cC(\binom{r+1}{2}-e(R))) + \binom{e(B)}{t-1} \le k_t(\cC(\binom{r+1}{2}-e(R)+e(B)))$. Therefore
\begin{align*}
k_t(G) &= k_t(T\cup S) + k_t(B) + k_t(G\setminus(T\cup S))\\
	&\le k_t\parens[\Big]{\cC\parens[\Big]{\binom{r+1}{2}-e(R)}}+\binom{e(B)}{t-1} + k_t(G\setminus(T\cup S)) \text{ by Propositions \ref{prop:kkt} and \ref{prop:bluetbound}}\\
	&\le k_t\parens[\Big]{\cC\parens[\Big]{\binom{r+1}{2}-e(R)+e(B)}} + k_t(G\setminus(T\cup S)) \text{ by Lemma \ref{lem:numt}}\\
	&=k_t(G'_T).\qedhere
\end{align*}
\end{proof}

\begin{cor}\label{cor:colexfold}Suppose that $G \in \cG_C(m,r)$ and $T$ is a cluster in $G$ with $e(B) < e(R) \le r$. Let $G'_T$ be the colex folding of $G$ at $T$. Then $G'_T \in \cG(m,r)$ and $\k(G'_T) \ge \k(G)$.
\end{cor}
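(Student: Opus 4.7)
The proof plan is straightforward since the corollary is an immediate aggregation of Theorem~\ref{thm:colexfold}. The plan is to simply sum the $t$-wise inequalities over all $t \ge 2$.

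First, I would invoke Theorem~\ref{thm:colexfold} directly: under the hypothesis $e(B) < e(R) \le r$, that theorem already delivers $G'_T \in \cG(m,r)$, so the membership assertion of the corollary is for free. The remaining content is the inequality $\k(G'_T) \ge \k(G)$.

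By definition, $\k(H) = \sum_{t \ge 2} k_t(H)$ for any graph $H$. Theorem~\ref{thm:colexfold} says that $k_t(G'_T) \ge k_t(G)$ \emph{for every} $t \ge 2$. Summing these inequalities over $t \ge 2$ yields
\[
    \k(G'_T) = \sum_{t\ge 2} k_t(G'_T) \;\ge\; \sum_{t\ge 2} k_t(G) = \k(G),
\]
which is exactly the claim. Note that the sums are finite because $k_t(H) = 0$ once $t$ exceeds the clique number of $H$, so no convergence issue arises.

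There is no real obstacle here: all the work was already done in Theorem~\ref{thm:colexfold}, which established the stronger statement level-by-level in clique size. The corollary is essentially a notational convenience, packaging the per-$t$ bounds into a single bound on the aggregate count $\k$, which is the quantity that appears in the Main Theorem.
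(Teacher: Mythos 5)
Your proof is correct and takes exactly the same route as the paper: invoke Theorem~\ref{thm:colexfold} for the membership claim and the per-$t$ inequalities, then sum over $t \ge 2$. The paper's proof is simply a one-line version of the same argument.
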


\begin{proof}
By Theorem \ref{thm:colexfold}, $\sum_{t\ge 2} k_t(G'_T) \ge \sum_{t\ge 2} k_t(G)$.
\end{proof}

\subsection{Partial Folding}\label{subsec:partial}

Partial folding is a simpler variant of folding, first used in \cite{KR19}, to consider when $e(B) < e(R)$. For a graph $G \in \cG_C(m,r)$ and $T\of V(G)$ a cluster, a \emph{partial folding of $G$ at $T$} is a graph obtained from $G$ by deleting all blue edges and adding any $e(B)$ of the red edges. The resulting graph is always in $\cG(m,r)$. The following lemma shows that if some cluster $T$ in $G$ has $e(B) < e(R)$ and $\abs{T} \ge \frac{r-1}{2}$, then $\k(G)$ would be increased by a partial folding of $G$ at $T$.

\begin{lemma}\label{lem:oneedge}
	Suppose that $G \in \cG_C(m,r)$ and $T$ is a tight clique in $G$ with $\abs{T} \ge \frac{r-1}{2}$ and $e(B) < e(R)$. Let $G'$ be a partial folding of $G$ at $T$. Then $G'\in \cG(m,r)$ and $\k(G')>\k(G)$.
\end{lemma}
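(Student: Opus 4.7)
The plan is to establish the identity
\[
	\k(G') - \k(G) = 2^{t} M - L,
\]
where $t=|T|$, $M := k(G'[S]) - k(G[S])$, and $L$ counts the blue cliques (those containing at least one blue edge) in $G$, and then to verify $2^{t} M > L$. First I check $G' \in \cG(m,r)$ routinely: $e(G') = m$ by construction, and for each $x \in S$ we have $d_{G'}(x) = t + d_{G[S]}(x) + d_F(x) = r - d_R(x) + d_F(x) \le r$ since $d_F(x) \le d_R(x)$, while vertices in $T$ and outside $T \cup S$ only lose edges.

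Since $T$ is tight, $N[u]=T\cup S$ for every $u\in T$, so no vertex of $V(G)\setminus(T\cup S)$ is adjacent to any vertex of $T$. Consequently every blue clique has the form $A\cup W$ with nonempty $A\subseteq S$ and $W\subseteq V(G)\setminus(T\cup S)$. In $G'$ all edges between $T\cup S$ and its complement are gone, and $G'$ agrees with $G$ on $V(G)\setminus(T\cup S)$. Both $G[T\cup S]$ and $G'[T\cup S]$ are joins of $K_t$ with graphs on $S$; since every clique in a join $K_t+H$ decomposes as (subset of $T$) $\cup$ (clique in $H$), a direct computation yields $\k(G'[T\cup S]) - \k(G[T\cup S]) = 2^{t}M$, from which the identity follows.

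For the final step, set $s=|S|=r+1-t$ and $b=e(B)$. Each added $F$-edge is a new $2$-clique in $G'[S]$, so $M\ge b$. For the upper bound on $L$, I use three structural facts: (i) every blue clique $A\cup W$ has size at most $s$, since for any $u\in A$, $|A|-1+|W|\le d_{G[S]}(u)+d_B(u)\le s-1$; (ii) for a blue edge $e=uv$ with $u\in S$, the fact that $v$ has no $T$-neighbor gives $|N(u)\cap N(v)|\le d(u)-t\le s-1$, so $k_j(e)\le\binom{s-1}{j-2}$ for each $j\ge 2$; and (iii) each blue $j$-clique contains $|A|\cdot|W|\ge j-1$ blue edges. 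Combining these bounds and summing over $j$ yields $L\le b(2^s-2)/s$. The hypothesis $t\ge (r-1)/2$ is equivalent to $s\le t+2$, and a short algebraic calculation shows $s\cdot 2^t> 2^s-2$ in this range whenever $t\ge 2$, giving $2^t M\ge 2^t b>L$. The only exception is $r=3$ with $t=1$ and $s=3$, which forces $b\le 2$ and is verified by direct enumeration; this extremal case is the main technical obstacle, since the algebraic bound is tight by a constant factor there.
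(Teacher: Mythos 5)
Your proof is correct in its essentials but takes a more refined route than the paper's. The paper argues directly: each blue edge lies in at most $2^{s-2}$ cliques (bounding the loss), and each added red edge creates at least $2^{|T|}$ new cliques (one per subset of $T$), and concludes by comparing $2^{|T|}$ to $2^{s-2}$. You instead derive the exact identity $\k(G')-\k(G)=2^{t}M-L$ from the join structure $G[T\cup S]=K_t+G[S]$, and then bound $L$ by a per-$j$-clique count with the crucial correction that every blue $j$-clique contains at least $j-1$ blue edges, giving $L\le b(2^s-2)/s$. This correction is genuinely better than the paper's per-blue-edge bound $L\le 2^{s-2}b$ (which overcounts blue cliques with several blue edges), and it is what lets you obtain the \emph{strict} inequality in the tight regime $|T|=\lceil(r-1)/2\rceil$, where $2^{|T|}=2^{s-2}$ and the paper's bound alone would only give nonnegativity. (Indeed, as written the paper's final line multiplies the per-edge gain by $e(R)$ rather than $e(B)$, even though only $e(B)$ red edges are added; your accounting sidesteps that issue entirely.)

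Two points deserve more care. First, your blue-edge bound $|N(u)\cap N(v)|\le s-1$ can be sharpened to $s-2$ (since $v\in N(u)\setminus T$ but $v\notin N(u)\cap N(v)$), which is the bound the paper uses; with $s-2$ in place of $s-1$ you get $L\le b(2^{s-1}-1)/(s-1)$, and then $s\cdot 2^t>2^s-2$ is replaced by the weaker requirement $(s-1)2^t>2^{s-1}-1$, which already holds for $t=1$, $s=3$. That removes the need for the separate $r=3$ enumeration, which you only sketched. Second, both your argument and the paper's silently require $e(B)\ge 1$: if $e(B)=0$ the partial folding does nothing, $M=L=0$, and the conclusion $\k(G')>\k(G)$ fails. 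That degenerate case can only occur when $G=G[T\cup S]$ and hence $m<\binom{r+1}{2}$, which is outside where the lemma is invoked in the proof of the main theorem, but it is worth flagging that the strict inequality genuinely needs $e(B)\ge 1$.
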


\begin{proof}
Let $s = \abs{S_T}$. The loss from deleting the blue edges is at most $2^{s-2}e(B)$: for each blue edge $uv$ with $v \in S$, $v$ has $s-1-d_R(v)$ neighbors in $S$ and at most $d_R(v)-1$ neighbors in $V(G)\wo(T\cup S)$ other than $u$. In total, $u$ and $v$ have at most $s-2$ common neighbors, and any subset of them forms one clique containing $uv$.

The number of cliques gained from adding any one red edge is at least $2^{|T|}$, one for each subset of $T$. Therefore the net gain is at least $2^{|T|}e(R) - 2^{s-2}e(B) > (2^{|T|}-2^{s-2})e(R) \ge 0$, since $|T| \ge \frac{r-1}{2}$.
\end{proof}


\section{Averaging} 
\label{sec:averaging}

In this section we consider the connected graphs in which every cluster either is unfoldable or is small and has many edges missing from its neighborhood. This includes the graphs in which there are no clusters. We use an averaging argument to show that these graphs are not extremal.

\subsection{Fixed loss and unfoldable clusters}\label{subsec:FL}

In handling unfoldable clusters, we will make use of a concept introduced in \cite{CR14}, the \emph{fixed loss} of a cluster.

\begin{definition}[\cite{CR14}] The \emph{fixed loss} of a graph $R$ is
\[
	\fl(R) = \sum_{\substack{I\in \Ind(R)\\I\not=\0}} (2^{\d_I} - 1),
\]
where $\Ind(R)$ is the set of independent sets of $R$, and for any $I\of V(R)$, $\d_I = \min\setof{d_R(x)}{x\in I}$.
\end{definition}

The fixed loss $\fl(R_T)$ associated with a cluster $T$ is a crude upper bound on the number of cliques appearing in $G$ but not in $G_T$. We will use prior technical results about the fixed loss to show that unfoldable clusters have a large number of incident edges each contained in few $t$-cliques for every $t \ge 3$.

\begin{theorem}[{\cite[Theorem 4.4]{CR14}}]\label{thm:maxFL}
	If $R$ is a graph on $s$ vertices then
	\[
		\fl(R) \le \fl(K_s) = s(2^{s-1}-1).
	\]
\end{theorem}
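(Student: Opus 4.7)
The plan is to prove the inequality by a double-counting argument, after verifying $\fl(K_s) = s(2^{s-1}-1)$ directly: the only nonempty independent sets of $K_s$ are its $s$ singletons, each satisfying $\delta_I = s-1$, so their contributions sum to $s(2^{s-1}-1)$.

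For an arbitrary $R$ on $s$ vertices, I would fix an arbitrary linear order on $V(R)$ and assign to each nonempty $I \in \Ind(R)$ a \emph{representative} $x_I \in I$ attaining $d_R(x_I) = \delta_I$ (breaking ties by the order). The key observation is then that $2^{\delta_I} - 1$ is exactly the number of nonempty subsets of $N_R(x_I)$, so grouping independent sets by their representative gives
\[
    \fl(R) = \sum_{v \in V(R)} c_v \,(2^{d_R(v)} - 1),
\]
where $c_v$ is the number of nonempty independent sets $I$ with $x_I = v$. Any such $I$ contains $v$ and is independent, so $I \setminus \{v\} \subseteq V(R)\setminus N_R[v]$, a set of size $s - 1 - d_R(v)$; hence $c_v \le 2^{s-1-d_R(v)}$. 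Substituting,
\[
    \fl(R) \le \sum_{v} 2^{s-1-d_R(v)}\bigl(2^{d_R(v)} - 1\bigr) = \sum_{v} \bigl(2^{s-1} - 2^{s-1-d_R(v)}\bigr) \le s(2^{s-1}-1),
\]
where the last inequality uses $2^{s-1-d_R(v)} \ge 1$. The bound is sharp exactly for $K_s$, where every $d_R(v) = s-1$ and $c_v = 1$, so it is also a natural candidate for an equality-characterization step should that be needed later.

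The main conceptual obstacle — and the reason naive approaches stall — is that $\delta_I$ is a minimum over the elements of $I$, which makes edge-by-edge comparison (comparing $\fl(R)$ to $\fl(R+e)$) or induction on $s$ awkward: adding an edge simultaneously raises two degrees and deletes independent sets, and the resulting case analysis is cumbersome. The representative trick sidesteps all of this by re-encoding $2^{\delta_I} - 1$ as counting nonempty subsets of the neighborhood of a single distinguished vertex of $I$, reducing the problem to a clean vertex-indexed sum against which the trivial bounds $c_v \le 2^{s-1-d_R(v)}$ and $2^{s-1-d_R(v)} \ge 1$ are enough.
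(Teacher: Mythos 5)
Your proof is correct. Note that the paper itself states this theorem without proof, citing \cite[Theorem~4.4]{CR14}; but the footprint of that argument is visible in the paper's proof of Theorem~\ref{thm:flell}, which converts the sum over independent sets into a sum over vertices via the inequality $2^{\d_I}-1\le\abs{I}(2^{\d_I}-1)$ and then bounds, for each $v$, the number of independent sets containing $v$ by $2^{s-1-d(v)}$ (the step said to follow ``as in the proof of Theorem~\ref{thm:maxFL}''). Your representative trick accomplishes exactly this independent-set-to-vertex conversion, but more cleanly: by charging each $I$ to a single vertex $x_I$ achieving the minimum degree rather than to all $\abs{I}$ of its vertices, you avoid the multiplicative $\abs{I}$ slack. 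Both routes land on the same bound $\sum_v 2^{s-1-d(v)}(2^{d(v)}-1) = \sum_v (2^{s-1}-2^{s-1-d(v)}) \le s(2^{s-1}-1)$, and the slack is harmless here since $K_s$ has only singleton independent sets; your version has the small advantage of making the equality case transparent (all $d_R(v)=s-1$ and all $c_v=1$). One nitpick: the sentence about $2^{\d_I}-1$ counting nonempty subsets of $N_R(x_I)$ is true but does no work in your argument --- the actual step is just the identity $\d_I=d_R(x_I)$ --- so you could drop it.
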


The following lemma appears in \cite{CR14} in an incorrect form. The following version is correct. The corrected proof is available in \cite{CR14arxiv}.

\begin{lemma}\label{lem:sbound}
	If $T$ is a cluster in $G \in \cG_C(m,r)$ with $\k(G_T)\leq \k(G)$, then $\fl(R_T)\geq 2^r-2^{|T|}$ and $|T|\le \log_2(s)$, where $s = \abs{S_T}$.
\end{lemma}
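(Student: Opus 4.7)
The plan is to compute $\k(G_T)-\k(G)$ explicitly, bound the loss from blue cliques by $\fl(R_T)$, and then invoke the maximality of $T$ together with the classical fact that an $s$-vertex graph with no isolated vertex has at most $2^{s-1}+1$ independent sets.

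I would classify each clique $C$ of $G$ or $G_T$ by the triple $(C\cap T,\;C\cap S,\;C\cap(V\setminus(T\cup S)))$. Because vertices in $T$ have all their neighbors in $T\cup S$, any clique meeting $T$ lies in $T\cup S$; and in $G_T$, where every blue edge has been deleted, any clique meeting $S$ also lies in $T\cup S$. Cliques of $G$ inside $T\cup S$ correspond bijectively to pairs $(A,I)$ with $A\subseteq T$ and $I\in\Ind(R_T)$, while in $G_T$ every subset of $T\cup S$ is a clique since $G_T[T\cup S]=K_{r+1}$. Cliques meeting $V\setminus(T\cup S)$ split in $G$ into blue cliques and cliques of $G-T-S$; in $G_T$ only the latter remain. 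Careful bookkeeping (subtracting the empty set and singletons to enforce size $\ge 2$) yields
\begin{equation*}
\k(G_T)-\k(G) \;=\; 2^{r+1}-2^{|T|}\,i(R_T)-\beta,
\end{equation*}
where $i(R_T)=|\Ind(R_T)|$ and $\beta$ is the number of blue cliques of $G$.

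To bound $\beta$, observe that each blue clique decomposes uniquely as $I\cup J$ with $\emptyset\ne I\in\Ind(R_T)$ and $\emptyset\ne J\subseteq\bigcap_{x\in I} N_B(x)$. Since $d_B(x)\le d_R(x)$ for every $x\in S$, the common blue-neighborhood of $I$ has size at most $\delta_I$, providing at most $2^{\delta_I}-1$ choices for $J$. Summing gives $\beta\le\fl(R_T)$, and the hypothesis $\k(G_T)\le\k(G)$ then yields $\fl(R_T)\ge 2^{r+1}-2^{|T|}\,i(R_T)$. Next, the maximality of $T$ forces $R_T$ to have no isolated vertex: if $x\in S$ had $d_R(x)=0$, then $x$ would be joined in $G$ to every vertex of $(T\cup S)\setminus\{x\}$, and $T\cup\{x\}$ would be a tight clique strictly containing $T$. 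Applying the extremal bound $i(R_T)\le 2^{s-1}+1$ and using $|T|+s=r+1$ gives
\begin{equation*}
\fl(R_T) \;\ge\; 2^{r+1}-2^{|T|}(2^{s-1}+1) \;=\; 2^r-2^{|T|}.
\end{equation*}
For the second conclusion, I would rewrite this lower bound as $\fl(R_T)\ge 2^{|T|}(2^{s-1}-1)$ and combine with Theorem~\ref{thm:maxFL}, which gives $\fl(R_T)\le s(2^{s-1}-1)$. Cancelling the positive factor $2^{s-1}-1$ (using $s\ge 2$, which is fine since the case $s\le 1$ forces $G=K_{r+1}$ and is handled separately) produces $2^{|T|}\le s$, i.e., $|T|\le\log_2 s$.

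The main obstacle is the classical bound $i(H)\le 2^{|V(H)|-1}+1$ for graphs without isolated vertices. Though standard and tight at the star, a self-contained proof by induction via $i(H)=i(H-v)+i(H-N[v])$ needs a careful choice of $v$ to avoid creating new isolated vertices upon deletion; one splits on whether $H$ has a pendant vertex, and in the pendant subcase handles separately the situation where its unique neighbor also has degree $1$, so that $uv$ is an isolated edge.
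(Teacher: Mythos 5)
Your proof is correct and lands squarely in the fixed-loss framework of~\cite{CR14}, to whose corrected arXiv version the paper defers this lemma's proof. The exact identity $\k(G_T)-\k(G)=2^{r+1}-2^{|T|}\,i(R_T)-\beta$, the bound $\beta\le\fl(R_T)$ via the $(I,J)$ decomposition with $\abs{J}\le\delta_I$, the observation that maximality of $T$ forbids isolated vertices in $R_T$ (so $i(R_T)\le 2^{s-1}+1$), and Theorem~\ref{thm:maxFL} together yield both conclusions, and your induction sketch for the $2^{s-1}+1$ bound (separating the isolated-$K_2$ subcase so that deleting a vertex never creates an isolated vertex) is sound.
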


The following theorem and proof were slightly modified from \cite{CR14}.

\begin{theorem}\label{thm:flell}
	Let $R$ be a graph on $s$ vertices having $\ell$ vertices of degree one.  Then
	\[
		\fl(R)\leq 5\cdot 2^{s-2}+(s-\ell-2)2^{s-\ell-1}.
	\]
\end{theorem}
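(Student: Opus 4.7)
The plan is to proceed by induction on $s$. Since isolated vertices of $R$ contribute nothing to $\fl(R)$, I may assume every vertex has degree at least $1$. The case $\ell = 0$ is immediate from Theorem~\ref{thm:maxFL}, because $s(2^{s-1}-1) \le (2s+1)\, 2^{s-2} = 5\cdot 2^{s-2} + (s-2)\, 2^{s-1}$. So assume $\ell \ge 1$ and pick a vertex $v$ of degree $1$ with unique neighbor $u$. I will relate $\fl(R)$ to $\fl$ of a graph on one or two fewer vertices, splitting into three cases according to $d_R(u)$.

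If $d_R(u) = 1$, then $\{u,v\}$ is a $K_2$ component, so $R = R'' \sqcup K_2$ with $R'' = R - u - v$ on $s-2$ vertices and $\ell - 2$ degree-$1$ vertices. Partitioning independent sets of $R$ by their intersection with $\{u,v\}$ yields $\fl(R) = \fl(R'') + 2\, i(R'')$, where $i$ denotes the number of independent sets (including the empty one). Induction on $R''$ together with the trivial bound $i(R'') \le 2^{s-2}$ closes this case. If $d_R(u) \ge 2$, set $R' = R - v$, which has $s-1$ vertices and no isolated vertex, and in which either $\ell(R') = \ell$ (if $d_R(u) = 2$, since $u$ becomes degree-$1$) or $\ell(R') = \ell - 1$ (if $d_R(u) \ge 3$). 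Write $\fl(R) = \fl_{v\in I}(R) + \fl_{v\notin I}(R)$. The first term equals the number of independent sets in $R - u - v$, hence is at most $2^{s-2}$, since any $I \ni v$ has $\delta_I = 1$. For the second, compare $\fl_{v\notin I}(R)$ with $\fl(R')$ term-by-term on independent sets of $R'$: the only nonzero differences come from $I \ni u$ on which $u$ is the strict argmin of $d_{R'}$, and the per-set difference there equals $2^{d_{R'}(u)}$. Counting such $I = \{u\} \cup J$ by observing $J \subseteq V(R') \setminus N_{R'}[u]$ gives at most $2^{s-2-d_{R'}(u)}$ of them, so the total correction is at most $2^{s-2}$. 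Hence $\fl(R) - \fl(R') \le 2^{s-1}$ in both sub-cases.

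The main obstacle is the bookkeeping: one must verify in each case that the slack between the target bound at $(s,\ell)$ and the inductive bound at the reduced graph dominates the actual gain $\fl(R) - \fl(\text{smaller})$. A short calculation shows these slacks are $15 \cdot 2^{s-4}$ for $d_R(u) = 1$, equal to $5 \cdot 2^{s-3} + (s-\ell-1)\, 2^{s-\ell-2}$ for $d_R(u) = 2$ (where necessarily $\ell \le s - 1$ since $u \notin L$), and exactly $5 \cdot 2^{s-3}$ for $d_R(u) \ge 3$. Each comfortably exceeds the corresponding gain bound ($8 \cdot 2^{s-4}$ in Case~1, and $4 \cdot 2^{s-3}$ in the other two). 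The tightest case is $d_R(u) \ge 3$, where $5 \cdot 2^{s-3}$ barely absorbs a gain of $4 \cdot 2^{s-3}$; this is exactly what forces the constant $5$ in the theorem. Small base cases $s \le 2$ are checked by hand, after which the induction closes.
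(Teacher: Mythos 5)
Your proof is correct, and it takes a genuinely different route from the paper's. The paper bounds $\fl(R)$ directly by splitting it as $\fl'(R) + \fl''(R)$ according to whether the independent set meets the set $L$ of degree-one vertices; $\fl'(R)$ is bounded by a counting argument (with a special subcase when $R$ has a $K_2$ component), and $\fl''(R)$ by a sum-over-vertices convexity argument that mirrors the proof of Theorem~\ref{thm:maxFL}. You instead proceed by induction on $s$, peeling off a degree-one vertex $v$ with neighbor $u$: when $d_R(u)=1$ you delete the $K_2$ component and compute $\fl(R)=\fl(R'')+2\,i(R'')$ exactly (valid once isolated vertices are excluded, so that $\d_I=1$ for any $I$ containing $u$ or $v$); when $d_R(u)\ge 2$ you bound $\fl(R)-\fl(R-v)\le 2^{s-1}$ by a clean term-by-term comparison of $\d_I$ in $R$ versus $R-v$, with the only nonzero differences coming from independent sets where $u$ is the strict argmin. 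I verified the tracking of $\ell$ in each subcase, the slack computations, and the $\ell=0$ case via Theorem~\ref{thm:maxFL}; all check out, and the small-$s$ base cases are trivial. Your approach is arguably more modular and mechanically checkable, whereas the paper's is a direct one-shot bound. One small inaccuracy in your closing remark: your own slack-versus-gain analysis in the $d_R(u)\ge 3$ case shows a slack of $5\cdot 2^{s-3}$ against a gain of at most $4\cdot 2^{s-3}$, so the argument would in fact go through with the constant $4$ in place of $5$ (with equality in that case); it is therefore not quite right to say that case ``forces'' the constant $5$. This does not affect the correctness of the proof of the stated inequality.
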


\begin{proof}
	Let $L$ be the set of vertices of degree one.  We split up the sum computing $\fl(R)$ into two parts, the contributions of independent sets containing an element of $L$ and the rest.  To this end, let
	\begin{align*}
		\fl'(R)&=\sum_{\substack{I\in \Ind(R)\\I\cap L\neq \0}} 2^{\d_I}-1=\card\setof{I\in \Ind(R)}{I\cap L\neq \0},\quad\text{and}\\
		\fl''(R)&=\sum_{\substack{\0\neq I\in \Ind(R)\\I\cap L=\0}} 2^{\d_I}-1.
	\end{align*}
	To bound the first term, we consider two cases. If $R$ does not contain a $K_2$ component, then observe
	\[
		\fl'(R)=\card\setof{I\in \Ind(R)}{I\cap L\neq \0}\leq (2^{\ell}-1)2^{s-\ell-1}.
	\]
	This follows from the fact that no vertex of $L$ is adjacent to any other and therefore, given any nonempty subset $L'$ of $L$, at least one vertex of $R\wo L$ is excluded from $I$.  So there are at most $2^{s-\ell-1}$ independent sets contributing to $\fl'(R)$ of the form $L'\cup J$ where $L\cap J=\0$.
	
	Otherwise, $R$ contains a $K_2$ component $xy$. No independent set of $R$ contains both $x$ and $y$, and the independent sets containing $x$ are in bijection with those containing $y$. Therefore 
	\begin{align*}
		\fl'(R) &= \card\setof{I\in \Ind(R)}{x, y \notin I, I\cap L\neq \0} + 2\card\setof{I\in \Ind(R)}{x \in I, I\cap L\neq \0}\\
		&\le (2^{\ell-2}-1)(2^{s-\ell})+2(2^{\ell-2})(2^{s-\ell})\\
		&= (2^{\ell}+2^{\ell-1}-2)2^{s-\ell-1},
	\end{align*} where in the second line for each term we have counted the independent sets by their vertices in $L\wo \set{x,y}$ and then their vertices in $R \wo L$.
	
	On the other hand, writing $d_L(v)$ for $\abs{N(v)\cap L}$,
	\begin{align*}
		\fl''(R)&=\sum_{\substack{\0\neq I\in \Ind(R)\\I\cap L=\0}} \left(2^{\d_I}-1\right)\\
		&\leq \sum_{\substack{\0\neq I\in \Ind(R)\\I\cap L=\0}} \abs{I}(2^{\d_I}-1)\\
		&= \sum_{v\in V(R)\wo L}\; \sum_{\substack{v\in I\in \Ind(R)\\I\cap L=\0}} \left(2^{\d_I}-1\right)\\
	\intertext{}
		&\leq \sum_{v\in V(R)\wo L}\; \sum_{\substack{v\in I\in \Ind(R)\\I\cap L=\0}} \left(2^{d(v)}-1\right)\\
		&\leq \sum_{v\in V(R)\wo L} 2^{s-\ell-d(v)+d_L(v)-1}(2^{d(v)}-1)\\
		&= \sum_{v\in V(R)\wo L} \left(2^{s-\ell+d_L(v)-1}-2^{s-\ell-d(v)+d_L(v)-1}\right)\\
		&= 2^{s-\ell-1}\left(\sum_{v\in V(R)\wo L} 2^{d_L(v)}-\sum_{v\in V(R)\wo L}2^{d_L(v)-d(v)}\right)\\
		&\leq 2^{s-\ell-1}(2^{\ell}+s-\ell-1).
	\end{align*}
The fifth step above follows as in the proof of Theorem~\ref{thm:maxFL} and the final step uses the convexity of $2^x$ on the first term and ignores the second.

Combining these bounds, we have
\begin{align*}
	\fl(R)&=\fl'(R)+\fl''(R)\\
	&\leq 2^{s-\ell-1}(2^{\ell} + \max\set{0,2^{\ell-1}-1} + 2^{\ell}+s-\ell-2)\\
	&= \max\set{2^s + 2^{s-\ell-1}(s-\ell-2), 2^s +2^{s-2}+ 2^{s-\ell-1}(s-\ell-3)}\\
	&\le 5\cdot 2^{s-2} + 2^{s-\ell-1}(s-\ell-2).\qedhere
\end{align*}
\end{proof}

\begin{lemma}\label{lem:clusnum}
	Let $r\geq 3$ and $3 \le t \le r+1$.  If $T$ is a cluster in $G \in \cG_C(m,r)$ and $\k(G_T)\leq \k(G)$, then there are at least $2\binom{|T|}{2}$ edges $e$ incident to $T$ with $k_t(e) \le \binom{r-3}{t-2}$.
\end{lemma}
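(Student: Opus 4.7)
The strategy is to identify the ``good'' edges (those incident to $T$ with $k_t(e)\le\binom{r-3}{t-2}$) as precisely the edges from $T$ to vertices of $S_T$ having red degree at least $2$, and then invoke the fixed-loss machinery (Lemma \ref{lem:sbound} and Theorem \ref{thm:flell}) to guarantee enough such vertices.

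Set $s=|S_T|$, so $r=|T|+s-1$, and assume $|T|\ge 2$ (the lemma is vacuous otherwise). An edge $e\subset T$ has $k_t(e)=\binom{r-1}{t-2}>\binom{r-3}{t-2}$ for $t\ge 3$ and is not good. For an edge $uv$ with $u\in T$ and $v\in S_T$, the vertex $u$ has every neighbor inside $T\cup S_T$, so the common neighborhood of $\{u,v\}$ is $(T\setminus\{u\})\cup(N_G(v)\cap S_T)$, of size $r-1-d_R(v)$. Thus $k_t(uv)\le\binom{r-3}{t-2}$ precisely when $d_R(v)\ge 2$, and there are $|T|\cdot n_2$ good edges, where $n_2=|\{v\in S_T:d_R(v)\ge 2\}|$. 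The lemma therefore reduces to showing $n_2\ge|T|-1$.

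Next I would verify that $d_R(v)\ge 1$ for every $v\in S_T$: if $d_R(v)=0$ then $v$ is adjacent in $G$ to all of $T\cup S_T\setminus\{v\}$, so $N[v]=T\cup S_T=N[u]$ for every $u\in T$, and Lemma \ref{lem:clustructure}(1) forces $T\cup\{v\}$ to be a tight clique, contradicting maximality of $T$. Letting $\ell$ count the degree-one vertices of $R_T$, this gives $n_2=s-\ell$, and the goal becomes $\ell\le s-|T|+1$.

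Suppose for contradiction that $\ell\ge s-|T|+2$. Since $\k(G_T)\le\k(G)$, Lemma \ref{lem:sbound} yields
\[
    \fl(R_T)\ge 2^r-2^{|T|}=2^{|T|}\bigl(2^{s-1}-1\bigr)\qquad\text{and}\qquad s\ge 2^{|T|}.
\]
The integer function $y\mapsto(y-2)2^{y-1}$ is non-decreasing on $y\ge 0$ (taking values $-1,-1,0,4,16,\dots$), so from $s-\ell\le|T|-2$ and Theorem \ref{thm:flell} we obtain
\[
    \fl(R_T)\le 5\cdot 2^{s-2}+(|T|-4)2^{|T|-3}.
\]
Combining the two bounds yields
\[
    2^{|T|}\bigl(2^{s-1}-1\bigr)\le 5\cdot 2^{s-2}+(|T|-4)2^{|T|-3}.
\]
The main obstacle is verifying that this inequality fails for every admissible $(|T|,s)$. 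After dividing by $2^{s-2}$ the leading term on the left is $2^{|T|+1}\ge 8$, beating the constant $5$ on the right; the lower-order error terms are absorbed using the bound $s\ge 2^{|T|}$, with the cases $|T|\in\{2,3\}$ (where the right-hand correction is nonpositive) handled by direct substitution to close the contradiction.
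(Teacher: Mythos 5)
Your proof is correct and follows essentially the same strategy as the paper: identify the good edges as those from $T$ to vertices of $S$ with red degree at least $2$, reduce to showing $\ell \le s - |T| + 1$, and rule out $\ell \ge s - |T| + 2$ by combining Lemma \ref{lem:sbound} with Theorem \ref{thm:flell}. The only difference is cosmetic: where the paper derives the final numerical contradiction via convexity of $2^x$ applied to $2^{|T|}+2^{r+1-|T|}$ together with a $\frac18 r\log_2 r$ bound, you divide the combined inequality by $2^{s-2}$ and close it directly using $s\ge 2^{|T|}$ with small cases by substitution, which is arguably cleaner; you also make explicit the observation (implicit in the paper) that every vertex of $S$ has $d_R\ge 1$ by maximality of $T$. (Minor remark: your assertion that an edge inside $T$ has $k_t(e)=\binom{r-1}{t-2}$ exactly is false for $t\ge 4$ when $R$ is nonempty, but you never use it.)
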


\begin{proof}
	We will show that there are at least $|T|-1$ vertices $v$ in $S$ with $d_R(v) \ge 2$. This is clear if $\abs{T} = 1$. Otherwise, we let $\ell$ be the number of vertices of $R$ of degree one.  If $\ell\geq s-|T|+2$, then by Theorem~\ref{thm:flell}, we would have $\fl(R)\leq 5\cdot 2^{s-2}+(|T|-4)2^{|T|-3}$. Note that this would imply
	\[
		2^r-2^{|T|}\leq \fl(R)\leq 5\cdot 2^{r-1-|T|}+(|T|-4)2^{|T|-3}\leq 5\cdot 2^{r-1-|T|}+\frac{1}8 s\log_2 s\leq 5\cdot 2^{r-1-|T|}+\frac{1}8 r\log_2 r,
	\]
	by Lemma~\ref{lem:sbound}. We are assuming that $\abs{T} \ge 2$, and no cluster can have size $r$ for then it would not be a maximal clique---by definition it would be contained in a $K_{r+1}$. By the convexity of $2^x$, since $2 \le |T| \le r-1$, we have $2^{|T|} + 2^{r+1-|T|} \le 2^2 +2^{r-1}$, and	
	\begin{align*}
		2^{r}-2^{|T|}&\leq 5\cdot 2^{r-1-|T|}+\frac{1}8 r\log_2 r\\
		2^{r-1}-4\le2^{r}-2^{|T|}-2^{r+1-|T|}&\le 2^{r-1-|T|}+\frac{1}8 r\log_2 r\le 2^{r-3} + \frac18 r\log_2 r\\
		3\cdot 2^{r-3}-4 &< \frac18 r\log_2r,
	\end{align*}
a contradiction for $r \ge 4$. When $r=3$, we also have $|T|=2$, contradicting $2^r - 2^{|T|} \le 5\cdot 2^{r-1-|T|}+(|T|-4)2^{|T|-3}$.

	Let $h$ be the number of vertices in $R$ of degree at least two.  Having shown that $\ell\leq s-|T|+1$, we know that $h\geq |T|-1$. Thus there are $|T|h \ge 2\binom{|T|}{2}$ edges $xy$ with $x\in T$, $y \in S$, and $d_R(y) \ge 2$, so $|N_G(x)\cap N_G(y)| \le r-3$. Therefore $k_t(e) = k_{t-2}(G[N(x)\cap N(y)]) \le \binom{r-3}{t-2}$.
\end{proof}

\subsection{Averaging calculations}

We will write $E_1$ for the set of tight edges and let $E_2 = E(G)\setminus E_1$. 

\begin{lemma}\label{lem:type1bound}
	If every cluster $T$ in $G \in \cG(m,r)$ has $|T| \le \frac{r}{2}$, then less than half the edges of $G$ are tight.
\end{lemma}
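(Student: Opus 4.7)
The plan is to express $|E_1|$ as a sum over clusters and then compare it to $|E_2|$ by a short double count. First I would argue that every tight edge lies inside a single cluster. If $uv$ is tight then $|N(u)\cap N(v)|=r-1$, which forces $d(u)=d(v)=r$ and $N[u]=N[v]$, so by Lemma~\ref{lem:clustructure}(1), $u$ and $v$ lie in a common cluster. Conversely, every pair of vertices inside a cluster $T$ forms a tight edge, since such vertices share $|T|-2$ neighbors in $T\setminus\{u,v\}$ and $r+1-|T|$ common neighbors in $N(T)$, totaling $r-1$. On the other hand, no edge from $T$ to $S_T$ is tight: if $uv$ with $u\in T$ and $v\in S_T$ were tight then $N[u]=N[v]$ would place $v$ in the cluster of $u$, contradicting $v\notin T$. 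Since clusters are pairwise disjoint, these observations yield
\[
	|E_1| = \sum_T \binom{|T|}{2}.
\]

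Next I would count non-tight edges incident to each cluster $T$. Every vertex $v\in T$ has degree $r$ and is adjacent to all of $(T\cup S_T)\setminus\{v\}$, so the number of edges from $T$ to $S_T$ is exactly $|T|(r+1-|T|)$, all of which are non-tight by the previous paragraph. Lemma~\ref{lem:clustructure}(2) says each non-tight edge is incident to at most two clusters, hence
\[
	\sum_T |T|(r+1-|T|) \le 2|E_2|.
\]

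The key arithmetic step is
\[
	|T|(r+1-|T|) - 2\binom{|T|}{2} = |T|(r+2-2|T|) \ge 2|T|,
\]
where the inequality uses the hypothesis $|T|\le r/2$. Summing over clusters gives $\sum_T |T|(r+1-|T|) \ge 2|E_1| + 2\sum_T|T|$, and combining with the double-counting bound yields $|E_1| + \sum_T|T| \le |E_2|$. If $G$ has at least one cluster then $\sum_T|T|\ge 1$ and the strict inequality $|E_1|<|E_2|$ follows; otherwise $|E_1|=0$, and the conclusion is immediate whenever $m\ge 1$. Either way $|E_1|<m/2$. The main point to pin down carefully is the structural claim that tight edges are exactly the internal edges of clusters (in particular, the maximality argument excluding tight edges between $T$ and $S_T$); once that is in hand the rest is a one-line double count.
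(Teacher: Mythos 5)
Your proof is correct and takes essentially the same approach as the paper: both use $|E_1|=\sum_T\binom{|T|}{2}$, the double count $2|E_2|\ge\sum_T|T|(r+1-|T|)$, and the hypothesis $|T|\le r/2$, with the only cosmetic difference being that you organize the final arithmetic as a term-by-term comparison $|T|(r+1-|T|)-2\binom{|T|}{2}\ge 2|T|$ while the paper bounds the ratio $|E_1|/m\le \tfrac12-\tfrac1r$. You also helpfully spell out the structural facts (tight edges are exactly the internal edges of clusters, and edges from $T$ to $S_T$ are non-tight) that the paper cites from Lemma~\ref{lem:clustructure} without elaboration.
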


\begin{proof}Let $\cC$ be the set of clusters in $G$. By Lemma \ref{lem:clustructure}, each tight edge is in exactly one cluster, so $\abs{E_1} = \sum_{T \in \cC}\binom{\abs{T}}{2}$.

By Lemma \ref{lem:clustructure}, an edge is incident to at most two clusters. By counting in two ways the pairs $(e,T)$ where $e \in E_2$ is incident to the cluster $T$, $\abs{E_2} \ge \frac12 \sum_{T\in\cC}\abs{T}(r+1-\abs{T})$.

	\begin{align*}
	\frac{\abs{E_1}}{\abs{E_1}+\abs{E_2}} &\le \frac{\sum_{T \in \cC}\binom{\abs{T}}{2}}{\sum_{T \in \cC}\binom{\abs{T}}{2}+\frac12 \sum_{T\in\cC}\abs{T}(r+1-\abs{T})}\\
	&= \frac{\sum_{T \in \cC}\binom{\abs{T}}{2}}{\sum_{T \in \cC}\abs{T}r/2}\\
	&= \frac1r \cdot \frac{\sum_{T\in \cC} \abs{T}(\abs{T}-1)}{\sum_{T\in \cC} \abs{T}}\\
	&\le \frac1r \cdot \frac{\sum_{T\in \cC} \abs{T}(\frac{r}2-1)}{\sum_{T\in \cC} \abs{T}}\\
	&= \frac1r\parens[\Big]{\frac{r}2-1}\\
	&= \frac12-\frac1r < \frac12.\qedhere
	\end{align*}
\end{proof}

\begin{lemma}\label{lem:tsum}
If every cluster $T$ in $G \in \cG_C(m,r)$ either (1) is unfoldable, i.e, $\k(G) \ge \k(G_T)$, or (2) has $e(R) \ge r-2$, then for every $t \ge 4$, 
\[
	\sum_{e \in E(G)}k_t(e) \le \sum_{e \in E(G)}\binom{r-2}{t-2}.
\]
\end{lemma}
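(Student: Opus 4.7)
The plan is to define $\psi(e)=k_t(e)-\binom{r-2}{t-2}$ for every edge $e\in E(G)$ and prove $\sum_e\psi(e)\le 0$, which is equivalent to the lemma. Non-tight edges already satisfy $\psi(e)\le 0$ by Lemma~\ref{lem:clustructure}(3). For a tight edge $e=xy$ in a cluster $T$, observe that $N(x)\cap N(y)=(T\setminus\{x,y\})\cup S_T$ consists of exactly $r-1$ vertices and carries $\binom{r-1}{2}-e(R_T)$ edges of $G$, so Proposition~\ref{prop:kkt} (Kruskal--Katona) yields
\[
k_t(e)\le k_{t-2}\left(\cC\left(\binom{r-1}{2}-e(R_T)\right)\right).
\]

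I split the clusters according to the hypothesis. If $T$ is in Case~(A) with $e(R_T)\ge r-2$, then the argument of $\cC$ above is at most $\binom{r-2}{2}$, hence $k_t(e)\le\binom{r-2}{t-2}$ and $\psi(e)\le 0$; no further bookkeeping is needed for these clusters. If $T$ is in Case~(B), i.e.\ $e(R_T)\le r-3$ and $T$ is unfoldable, I first argue $e(R_T)\ge 1$: the earlier remark rules out clusters of size $r$, and since we may assume $G\ne K_{r+1}$ (this case is handled by induction on $m$ in the Main Theorem), we have $|T|\le r-1$ and $|S_T|\ge 2$, so if $e(R_T)=0$ then $T\cup S_T$ would be a tight clique of size $r+1$ properly containing $T$, contradicting maximality. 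Since $\cC(\binom{r-1}{2}-1)$ is $K_{r-2}$ together with a new vertex of degree $r-3$, its number of $(t-2)$-cliques equals $\binom{r-2}{t-2}+\binom{r-3}{t-3}$, so every tight edge in a Case~(B) cluster satisfies $\psi(e)\le\binom{r-3}{t-3}$.

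To absorb these positive contributions I invoke Lemma~\ref{lem:clusnum}: each Case~(B) cluster $T$ has at least $2\binom{|T|}{2}$ non-tight edges $f$ incident to $T$ with $k_t(f)\le\binom{r-3}{t-2}$, hence $\psi(f)\le -\binom{r-3}{t-3}$. Let $C$ be the union of these compensating edge sets over all Case~(B) clusters. Because each edge is incident to at most two clusters (Lemma~\ref{lem:clustructure}(2)), every edge lies in at most two of these sets, giving
\[
|C|\ge \tfrac{1}{2}\sum_{T\text{ Case (B)}}2\binom{|T|}{2}=\sum_{T\text{ Case (B)}}\binom{|T|}{2}.
\]
Adding the contributions of Case~(A) tight edges (at most $0$), Case~(B) tight edges (at most $\binom{|T|}{2}\binom{r-3}{t-3}$ per cluster), non-compensating non-tight edges (at most $0$), and compensating non-tight edges (at most $-\binom{r-3}{t-3}$ each) yields
\[
\sum_{e\in E(G)}\psi(e)\le \binom{r-3}{t-3}\left(\sum_{T\text{ Case (B)}}\binom{|T|}{2}-|C|\right)\le 0,
\]
as required. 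The main obstacle is that this final inequality is essentially tight when compensating edges are shared between two Case~(B) clusters, which makes it necessary both to use the precise Kruskal--Katona evaluation at $m'=\binom{r-1}{2}-1$ (rather than the weaker bound $\binom{r-1}{t-2}$) and to invoke the maximality argument giving $e(R_T)\ge 1$.
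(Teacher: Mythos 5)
Your proof is correct and uses essentially the same argument as the paper: both identify $\binom{r-3}{t-3}$ as the maximum excess over $\binom{r-2}{t-2}$ for a tight edge in a problematic (unfoldable, sparse-$R$) cluster, and both charge this excess to the $2\binom{|T|}{2}$ low-count incident edges supplied by Lemma~\ref{lem:clusnum}, with the factor $\tfrac12$ absorbing the fact that a compensating edge can be shared by two clusters (Lemma~\ref{lem:clustructure}(2)). The paper packages the bookkeeping in an auxiliary quantity $k'_t(e)=k_t(e)+(\tfrac{u'_e}{2}-u_e)\binom{r-3}{t-3}$ and verifies $k'_t(e)\le\binom{r-2}{t-2}$ edge by edge, whereas you partition edges directly and bound $\psi(e)$, and you obtain the tight-edge bounds uniformly via Kruskal--Katona on $\cC(\binom{r-1}{2}-e(R_T))$ rather than the paper's ad hoc inclusion--exclusion on the one missing edge; these are equivalent since $\binom{r-1}{t-2}-\binom{r-3}{t-4}=\binom{r-2}{t-2}+\binom{r-3}{t-3}$. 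You are also a bit more explicit than the paper about the standing assumption $G\ne K_{r+1}$ (needed to guarantee $e(R_T)\ge 1$, and implicit in the paper's appeal to ``at least one edge is missing from its neighborhood''), which is a small improvement in rigor.
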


\begin{proof}For every edge $e$, we will define a modified version of $k_t(e)$. We let $u_e$ be $1$ if $e$ belongs to an unfoldable cluster and $0$ otherwise. We let $u'_e$ be the number of unfoldable clusters to which $e$ is incident if $k_t(e) \le \binom{r-3}{t-2}$ and $0$ otherwise. We set
\[
	k'_t(e) = k_t(e) + (\frac{u'_e}{2}-u_e)\binom{r-3}{t-3}.
\]

We show for all edges in $G$ that $k'_t(e) \le \binom{r-2}{t-2}$. Suppose first that $k_t(e) \le \binom{r-3}{t-2}$. Note $u_e \ge 0$ and, by Lemma \ref{lem:clustructure}, $u'_e \le 2$. For these edges we have
\[
	k'_t(e) \le \binom{r-3}{t-2}+ \frac22\binom{r-3}{t-3} = \binom{r-2}{t-2}.
\]
Now if $k_t(e) > \binom{r-3}{t-2}$, then we have 
\[
	k'_t(e) = k_t(e) - u_e\binom{r-3}{t-3}.
\]
If such an edge is in an unfoldable cluster ($u_e = 1$), then at least one edge is missing from its neighborhood (using the hypothesis that $G$ is connected), so 
\begin{align*}
	k'_t(e) = k_t(e) - \binom{r-3}{t-3} &= k_{t-2}(N(e)) - \binom{r-3}{t-3}\\
	&\le \binom{r-1}{t-2}-\binom{r-3}{t-4} - \binom{r-3}{t-3} = \binom{r-2}{t-2}.
\end{align*}
Finally, if $e$ has $k_t(e) > \binom{r-3}{t-2}$ and is not in an unfoldable cluster (so $u_e = u'_e = 0$ and $k'_t(e) = k_t(e)$), then either $e$ is not in a cluster, in which case $|N(e)| \le r-2$ and $k_t(e) \le \binom{r-2}{t-2}$, or $e$ is in a cluster of type (2), so there are at most $\binom{r-1}2 - (r-2) = \binom{r-2}2$ edges in $N(e)$. By Proposition~\ref{prop:kkt}, $k'_t(e) = k_t(e) = k_{t-2}(N(e)) \le \binom{r-2}{t-2}$.
Now we compute as follows. 
\begin{align*}
     \sum_{e \in E(G)} \binom{r-2}{t-2} \ge  \sum_{e \in E(G)} k'_t(e) 
         = \sum_{e\in E(G)} k_t(e) + \binom{r-3}{t-3} \sum_{e\in E(G)} \frac{u'_e}{2} - \binom{r-3}{t-3} \sum_{e\in E} u_e. 
\end{align*}
The final negative term is $\binom{r-3}{t-3}$ times the total number of edges in unfoldable clusters, i.e.,
\[
  \binom{r-3}{t-3} \sum_{\text{$T$ an unfoldable cluster}} \binom{\abs{T}}{2}.  
\]
On the other hand, by Lemma \ref{lem:clusnum}, the second term is at least 
\[
    \frac12\binom{r-3}{t-3} \sum_{\text{$T$ an unfoldable cluster}} 2\binom{|T|}{2}. 
\] 
Thus
\[
    \sum_{e \in E(G)} \binom{r-2}{t-2} \ge  \sum_{e\in E(G)} k_t(e). \qedhere
\]
\end{proof}

\begin{lemma}\label{lem:weakineq}
	Let $r \ge 3$ and $G \in \cG(m,r)$. If $|E_1| < |E_2|$, and, for every $t\ge 4$, we have $\sum_{e \in E(G)} k_t(e) \le \sum_{e \in E(G)} \binom{r-2}{t-2}$, then
	\[
		\sum_{t \ge 2}\frac{1}{\binom{t}{2}}\sum_{e \in E(G)}k_t(e) < g(m,r).
	\]
\end{lemma}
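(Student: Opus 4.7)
The plan is first to observe that the left-hand side is exactly $\k(G)$, since each $t$-clique contains $\binom{t}{2}$ edges and hence $\sum_{e} k_t(e) = \binom{t}{2}\, k_t(G)$. So the target reduces to proving $\k(G) < g(m,r)$ via a per-edge averaging bound.

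Next I would bound $\sum_{e} k_t(e)$ separately for each $t$. Trivially $\sum_{e} k_2(e) = m$. For $t=3$, $k_3(e) = |N(e)|$ equals $r-1$ for tight edges and is at most $r-2$ for non-tight edges by Lemma~\ref{lem:clustructure}, so $\sum_{e} k_3(e) \le (r-1)|E_1| + (r-2)|E_2| = (r-2)m + |E_1|$. For $t\ge 4$, apply the hypothesis directly. The identity $\binom{r-2}{t-2}/\binom{t}{2} = 2\binom{r}{t}/[r(r-1)]$, which follows from two applications of $\binom{n}{k}/(k+1) = \binom{n+1}{k+1}/(n+1)$, makes the sum over $t \ge 2$ telescope to $(2^r - r - 1)/\binom{r}{2} = \k(K_r)/\binom{r}{2}$, the per-edge clique density of $K_r$. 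Combining yields
\[
\k(G) \le \frac{m\,\k(K_r)}{\binom{r}{2}} + \frac{|E_1|}{3}.
\]

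Then, from $|E_1| < |E_2|$ one has $|E_1| < m/2$, producing the strict bound $\k(G) < m\bigl(\k(K_r)/\binom{r}{2} + 1/6\bigr)$. It remains to compare this to $g(m,r) = (m-b)\,\k(K_{r+1})/\binom{r+1}{2} + \k(\cC(b))$. The crucial ingredient is the per-edge density gap
\[
\frac{\k(K_{r+1})}{\binom{r+1}{2}} - \frac{\k(K_r)}{\binom{r}{2}} = \frac{2[2^r(r-3) + (r+3)]}{r(r^2-1)},
\]
which exceeds $1/6$ for every $r \ge 3$ (by direct evaluation: the gap equals $1/2$ at $r=3$ and grows monotonically thereafter).

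The main obstacle is handling the residual contribution of the colex remainder $\cC(b)$. The inequality $m\bigl(\k(K_r)/\binom{r}{2}+1/6\bigr) \le g(m,r)$ rearranges to
\[
m\left(\frac{\k(K_{r+1})}{\binom{r+1}{2}} - \frac{\k(K_r)}{\binom{r}{2}} - \frac{1}{6}\right) \ge b \cdot \frac{\k(K_{r+1})}{\binom{r+1}{2}} - \k(\cC(b)).
\]
When $a \ge 1$ (i.e., $m \ge \binom{r+1}{2}$) this is routine, because the left grows linearly in $m$ while the right is bounded by a function of $r$ alone. The delicate part is the small-$m$ regime $a = 0$; there I would use the explicit formula $\k(\cC(b)) = 2^c - c + 2^d - 2$ (with $b = \binom{c}{2} + d$) together with Proposition~\ref{prop:kkt} to verify the inequality by case analysis on $(c, d)$.
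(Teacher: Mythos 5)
Your opening reduction is correct and matches the paper exactly: the left side is $\k(G)$, and your bound
\[
\k(G) \le \frac{m\,\k(K_r)}{\binom{r}{2}} + \frac{|E_1|}{3} < m\left(\frac{\k(K_r)}{\binom{r}{2}} + \frac16\right)
\]
is identical (after multiplying through) to the paper's $\frac{m}{\binom r2}\bigl(2^r + r^2/12 - 13r/12 - 1\bigr)$. The ``density gap'' reformulation is a clean way to state the target inequality. The gap is in the claim that the case $a\ge 1$ is ``routine because the left grows linearly in $m$.'' Linearity in $a$ with positive slope only reduces the problem to the base case $a=1$, and that base case is the real content: the paper devotes a substantial case analysis on $c$ (using the crude lower bound $\k(\cC(b)) \ge 2^c - c - 1$), and that analysis only closes for $r\ge 7$. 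For $3 \le r \le 6$ the paper abandons the direct comparison entirely and instead imports Theorem~\ref{thm:kr19} from \cite{KR19} (that $k_3(G) \le k_3(aK_{r+1}\cup\cC(b))$ for $r\le 8$), comparing only $\sum_{t\ge 4}k_t$ to finish. Your sketch neither performs the $a=1$ verification nor acknowledges that a separate tool is needed for small $r$, so it does not constitute a proof.

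The secondary issue is your treatment of $a=0$. The inequality $m\bigl(\k(K_r)/\binom r2 + 1/6\bigr) \le g(m,r)$ is simply false there: for $r=3$, $m=b=5$, the left side is $5\cdot(4/3 + 1/6) = 7.5$ while $g(5,3) = \k(\cC(5)) = 7$. No amount of case analysis on $(c,d)$ will rescue it, because the statement you are trying to prove fails. (Indeed the lemma itself, read literally, fails when $a=0$: $G = \cC(5)$ with $r=3$ satisfies $|E_1|=1<4=|E_2|$ and has no $K_4$, yet $\k(G) = g(5,3)$, not strictly less. This is a harmless imprecision in the paper because the lemma is only ever invoked when $m > \binom{r+1}2$.) You should restrict to $a\ge 1$ rather than promise a case analysis for $a=0$; more importantly, you should supply the $a=1$ argument, which is where the actual work of the lemma lies.
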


\begin{proof}We first split up the sum on the left side, use the given properties of $G$, and find an upper bound in terms of $m$ and $r$.
\begin{align*}
\sum_{t \ge 2}\frac{1}{\binom{t}{2}}\sum_{e \in E(G)}k_t(e) &= m + \frac13\sum_{e\in E(G)} k_3(e) + \sum_{t \ge 4}\frac{1}{\binom{t}{2}}\sum_{e \in E(G)} k_t(e)\\
&\le m +\frac13(\sum_{e\in E_1}(r-1) + \sum_{e\in E_2}(r-2)) + \sum_{t \ge 4}\frac{m}{\binom{t}{2}}\binom{r-2}{t-2}\\
&= m + \frac13(\sum_{e\in E_1}(r-1) + \sum_{e\in E_2}(r-2)) + \frac{m}{\binom{r}{2}}\sum_{t \ge 4}\binom{r}{t}\\
&< m + \frac13(\frac{m}2 (r-1) + \frac{m}2(r-2)) + \frac{m}{\binom{r}2}(2^r -\binom{r}{3}-\binom{r}{2}-r-1)\\
&= \frac m3(r-3/2) + \frac{m}{\binom{r}2}(2^r -\binom{r}{3}-r-1)\\
&= \frac{m}{\binom{r}2}(2^r -\binom{r}{3}-r-1 + \frac{r-3/2}{3}\binom{r}{2})\\
&= \frac{m}{\binom{r}2}(2^r + r^2/12-13r/12-1).\end{align*}

We will prove that this is less than the number of cliques in $aK_{r+1}\cup\cC(b)$; i.e., that
\[
	a(2^{r+1}-r-2)+\k(\cC(b)) - \frac{a\binom{r+1}2+b}{\binom{r}2}(2^r + r^2/12-13r/12-1) \ge 0.
\]
For fixed $r$ and $b$, the left side of this inequality is a linear function of $a$. The coefficient of $a$ depends only on $r$. For $r \ge 7$, we will show that the inequality holds when $a=1$. Then the special case $b=0$ implies that the coefficient of $a$ is positive and therefore that the inequality holds for $a \ge 1$.
\begin{align*}
&2^{r+1}-r-2+\k(\cC(b)) - \frac{\binom{r+1}2+b}{\binom{r}2}(2^r + r^2/12-13r/12-1)\\
&= 2^r \left(2 - \frac{r+1}{r-1} - \frac{b}{\binom{r}{2}}\right) -r-2 +\frac{\binom{r+1}2+b}{\binom{r}2}(r^2/12-13r/12-1)+\k(\cC(b))\\
&\ge 2^r \left(1 - \frac{2}{r-1} - \frac{\binom{c+1}{2}-1}{\binom{r}{2}}\right) -r-2 +\frac{\binom{r+1}2+\binom{c}{2}}{\binom{r}2}(r^2/12-13r/12-1)+2^c-c-1\\
\intertext{}
&\ge \begin{cases}
2^r (1 - \frac{2}{r-1} - \frac{\binom{r-2}{2}-1}{\binom{r}{2}}) -r-2 +\frac{\binom{r+1}2+\binom{1}{2}}{\binom{r}2}(r^2/12-13r/12-1) & 1 \le c \le r-3\\
-r-2 +\frac{\binom{r+1}2+\binom{r-2}{2}}{\binom{r}2}(r^2/12-13r/12-1)+ 2^{r-2}-(r-2)-1 & c = r-2\\
2^r (-2/r) -r-2 +\frac{\binom{r+1}2+\binom{r-1}{2}}{\binom{r}2}(r^2/12-13r/12-1)+2^{r-1}-r & c = r-1\\
2^r (-\frac{2(2r-1)}{r(r-1)}) -r-2 +\frac{\binom{r+1}2+\binom{r}{2}}{\binom{r}2}(r^2/12-13r/12-1)+2^{r}-r-1 & c = r
\end{cases}\\
&\ge 0 \text{ for } r \ge 7.
\end{align*}

For $3 \le r \le 6$ we will use the fact that $k_3(G) \le k_3(aK_{r+1}\cup\cC(b))$, which we proved in \cite{KR19} for $r \le 8$. It is enough to show $\sum_{t \ge 4}k_t(G) < \sum_{t \ge 4}k_t(aK_{r+1}\cup\cC(b))$, as it implies 
\begin{align*}
	\k(G) = m + k_3(G) + \sum_{t \ge 4}k_t(G) &< m + k_3(aK_{r+1}\cup\cC(b)) + \sum_{t \ge 4}k_t(aK_{r+1}\cup\cC(b))\\
	&= \k(aK_{r+1}\cup\cC(b)).
\end{align*}
By assumption, $\sum_{e \in E(G)} k_t(e) \le \sum_{e \in E(G)} \binom{r-2}{t-2}$ for every $t\ge 4$, so 
\begin{align*}
	\sum_{t \ge 4}k_t(G) &= \sum_{t \ge 4}\frac{1}{\binom{t}{2}}\sum_{e \in E(G)}k_t(e)\\
	& \le \sum_{t \ge 4}\frac{1}{\binom{t}{2}}\sum_{e \in E(G)}\binom{r-2}{t-2} = \frac{m}{\binom{r}{2}}\sum_{t \ge 4} \binom{r}{t} = \frac{m}{\binom{r}{2}}(2^r - \binom{r}3-\binom{r}2-r-1).
\end{align*}
We will use this inequality together with the fact that \[\sum_{t \ge 4}k_t(aK_{r+1}\cup\cC(b)) \ge \sum_{t \ge 4}k_t(aK_{r+1}) = a(2^{r+1}-\binom{r+1}3-\binom{r+1}2-(r+1)-1).\] Taking $a=1$,
\begin{align*} 
&\sum_{t \ge 4}k_t(K_{r+1}\cup\cC(b)) - \sum_{t \ge 4}k_t(G)\\
& \ge (2^{r+1}-\binom{r+1}3-\binom{r+1}2-(r+1)-1) - \frac{\binom{r+1}2+b}{\binom{r}{2}}(2^r - \binom{r}3-\binom{r}2-r-1)\\
&= \begin{cases}
1 & r=3\\
(26-b)/6 & r=4\\
(65-3b)/5 & r=5\\
2(249-11b)/15 & r=6
\end{cases}
\quad > 0, \text{
using the fact that } b \le \binom{r+1}{2}-1 \le 20.
\end{align*}
As in the $r\ge 7$ case, the inequality holds for $a \ge 1$ since it holds for $a=1$.
\end{proof}

\begin{theorem}\label{thm:everycluster}
If every cluster $T$ in $G \in \cG_C(m,r)$ either (1) is unfoldable, i.e., $\k(G) \ge \k(G_T)$, or (2) has $e(R) \ge r-2$ and $|T| \le r/2$, then $G$ is not extremal.
\end{theorem}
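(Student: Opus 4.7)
The plan is to derive the result by combining the three lemmas immediately preceding the theorem (Lemmas~\ref{lem:type1bound}, \ref{lem:tsum}, and \ref{lem:weakineq}) with the edge-counting identity
\[
    \k(G) = \sum_{t\ge 2} k_t(G) = \sum_{t\ge 2}\frac{1}{\binom{t}{2}}\sum_{e\in E(G)} k_t(e),
\]
which holds because each $t$-clique contains $\binom{t}{2}$ edges. Once both hypotheses of Lemma~\ref{lem:weakineq} are verified, that lemma immediately gives $\k(G) < g(m,r)$, so $G$ cannot be extremal.

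To verify the first hypothesis of Lemma~\ref{lem:weakineq}, namely $|E_1| < |E_2|$, I would invoke Lemma~\ref{lem:type1bound}, which only requires that every cluster have $|T|\le r/2$. For clusters of type (2) this is part of the hypothesis. For clusters of type (1) (unfoldable clusters) I would appeal to Lemma~\ref{lem:sbound}, which gives $|T|\le\log_2 s$, where $s = r+1-|T|$. Thus $2^{|T|}+|T|\le r+1$. For $|T|=1$ we trivially have $|T|\le r/2$ whenever $r\ge 2$; for $|T|\ge 2$ the inequality $2^{|T|}\ge 2|T|$ yields $3|T|\le r+1$, hence $|T|\le (r+1)/3\le r/2$ for all $r\ge 2$. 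So in both cases every cluster satisfies $|T|\le r/2$, and Lemma~\ref{lem:type1bound} applies.

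To verify the second hypothesis of Lemma~\ref{lem:weakineq}, namely that $\sum_e k_t(e)\le m\binom{r-2}{t-2}$ for every $t\ge 4$, I would apply Lemma~\ref{lem:tsum} directly. Its hypothesis is that every cluster be either unfoldable or have $e(R)\ge r-2$, which is exactly case~(1) or case~(2) of the present theorem (the extra condition $|T|\le r/2$ in (2) is not needed here). Hence Lemma~\ref{lem:tsum} gives the required bound, and Lemma~\ref{lem:weakineq} yields $\k(G)<g(m,r)$, completing the argument.

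The main technical obstacle is really just the verification that unfoldable clusters satisfy $|T|\le r/2$; everything else is a clean bookkeeping step. I would also mention that the argument implicitly needs $r\ge 3$, since Lemma~\ref{lem:weakineq} carries that hypothesis, but small values of $r$ can be handled separately in the proof of the Main Theorem.
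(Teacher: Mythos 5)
Your proof is correct and follows essentially the same approach as the paper: bound $|T|\le r/2$ for unfoldable clusters via Lemma~\ref{lem:sbound}, then chain Lemmas~\ref{lem:type1bound}, \ref{lem:tsum}, and \ref{lem:weakineq} with the identity $\k(G)=\sum_{t\ge 2}\tfrac{1}{\binom{t}{2}}\sum_e k_t(e)$. The only cosmetic difference is that the paper deduces $|T|\le r/2$ in one stroke from $2|T|+1\le |T|+2^{|T|}\le r+1$ (using $2^{|T|}\ge |T|+1$), whereas you split into the cases $|T|=1$ and $|T|\ge 2$; both are fine.
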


\begin{proof}
Lemma \ref{lem:sbound} states that the unfoldable clusters have $|T| \le \log_2(s)$, which implies the second inequality of
\[
	2|T|+1 \le |T| + 2^{|T|} \le r+1,  
\] so $|T| \le r/2$ for every cluster of $G$. Lemma \ref{lem:type1bound} shows that less than half the edges are tight. Lemma \ref{lem:tsum} shows that for every $t \ge 4$, $\sum_{e \in E(G)}k_t(e) \le \sum_{e \in E(G)}\binom{r-2}{t-2}$. Therefore the hypotheses of Lemma \ref{lem:weakineq} are satisfied, so $\k(G) = \sum_{t\ge 2}\frac{1}{\binom{t}{2}}\sum_{e\in E(G)}k_t(e) < g(m,r)$.
\end{proof}


\section{Proof of the Main Theorem} 
\label{sec:proof}

\begin{lemma}\label{lem:disco} Given $1 \le m_1 \le m_2$, let $G_i = a_iK_{r+1}\cup\cC(b_i)$, where $m_i = a_i\binom{r+1}2 +b_i$. Set $m = m_1 + m_2$ and let $G = aK_{r+1}\cup \cC(b)$, where $m = a\binom{r+1}2 +b$. Then $\k(G) \ge \k(G_1) + \k(G_2)$, with strict inequality unless $G_1 \cup G_2$ is an extremal graph according to the main theorem.
\end{lemma}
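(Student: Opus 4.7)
My plan is to factor out the shared $K_{r+1}$ components from both sides of the inequality and then case-split on whether $b_1+b_2<\binom{r+1}{2}$. Writing $n:=r+1$ and using $\k(aK_n\cup H)=a(2^n-n-1)+\k(H)$, one observes that $a$ equals either $a_1+a_2$ (when $b_1+b_2<\binom{n}{2}$) or $a_1+a_2+1$ (when $b_1+b_2\ge\binom{n}{2}$). Cancelling the common $(a_1+a_2)(2^n-n-1)$ from both sides reduces the claim to either Case~A, $\k(\cC(b_1+b_2))\ge\k(\cC(b_1))+\k(\cC(b_2))$ (when $b_1+b_2<\binom{n}{2}$), or Case~B, $\k(K_n)+\k(\cC(b))\ge\k(\cC(b_1))+\k(\cC(b_2))$ (when $b=b_1+b_2-\binom{n}{2}\ge 0$).

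Case~A is immediate from Proposition~\ref{prop:kkt}: the disjoint union $\cC(b_1)\cup\cC(b_2)$ has $b_1+b_2$ edges, so for each $t\ge 2$, $k_t(\cC(b_1))+k_t(\cC(b_2))=k_t(\cC(b_1)\cup\cC(b_2))\le k_t(\cC(b_1+b_2))$. Summing over $t\ge 2$ delivers the inequality. The equality analysis, using the closed form $\k(\cC(x))=2^c-c+2^d-2$ for $x=\binom{c}{2}+d$, identifies the tight configurations as $b_1=0$, $b_2=0$, or $\{b_1,b_2\}=\{\binom{c}{2},1\}$, corresponding respectively to $G_1\cup G_2$ being already of the form $aK_{r+1}\cup\cC(b)$ or $aK_{r+1}\cup K_c\cup K_2$.

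For Case~B, assume without loss of generality $b_1\ge b_2$ and set $k:=\binom{n}{2}-b_1\ge 1$, so that $b_2=b+k$. The claim then rewrites as
\[
    \k(K_n)-\k(\cC(b_1))\;\ge\;\k(\cC(b_2))-\k(\cC(b)),
\]
which compares two increments in $\k\circ\cC$ obtained by adding $k$ edges in colex order: the left side fills in the top $k$ edges of $K_n$ onto $\cC(b_1)$, while the right side adds edges at colex positions $b+1,\ldots,b_2$ onto $\cC(b)$. I would prove this by direct computation from the closed form $\k(\cC(x))=2^c-c+2^d-2$, splitting on the cascade representations $b_i=\binom{c_i}{2}+d_i$; equivalently, one constructs an injection from cliques of $\cC(b_2)$ containing an edge of $\cC(b_2)\setminus\cC(b)$ into cliques of $K_n$ containing an edge of $E(K_n)\setminus E(\cC(b_1))$. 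Equality never holds in Case~B, since any extremal graph for $m$ edges has $a_1+a_2+1$ copies of $K_{r+1}$ whereas $G_1\cup G_2$ has only $a_1+a_2$.

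The main obstacle is that the per-edge increments $h(x):=\k(\cC(x+1))-\k(\cC(x))$ are \emph{not} monotone in $x$---they oscillate, with a spike of size $2^{c-2}$ each time a new $K_c$ is completed---so one cannot prove Case~B by iteratively moving single edges from $\cC(b_2)$ to $\cC(b_1)$, as such single moves can strictly decrease $\k$ at an intermediate step. Instead, the proof must compare the sums of $k$ consecutive $h$-values in aggregate; the key fact to establish is that the sum over the top $k$ positions of $h$ (which collects the large jumps $2^{n-2},2^{n-3},\ldots$ from successively completing the final edges of $K_n$) dominates the sum over any other run of $k$ consecutive positions in $[0,\binom{n}{2}-1]$.
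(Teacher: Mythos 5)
Your case split is the right one, and Case~A is correct and essentially complete: applying Proposition~\ref{prop:kkt} to the $(b_1+b_2)$-edge graph $\cC(b_1)\cup\cC(b_2)$ gives $k_t(\cC(b_1))+k_t(\cC(b_2))\le k_t(\cC(b_1+b_2))$ for every $t$, and summing over $t\ge 2$ settles $\k(\cC(b_1+b_2))\ge\k(\cC(b_1))+\k(\cC(b_2))$. The equality cases you list (one $b_i=0$, or $\{b_1,b_2\}=\{\binom{c}{2},1\}$) are the correct ones, though you assert rather than verify them; a short computation from $\k(\cC(x))=2^c-c+2^d-2$ would be needed to make this airtight.

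The genuine gap is in Case~B, which is the heart of the lemma. You correctly reduce it to the statement that, writing $h(x)=\k(\cC(x+1))-\k(\cC(x))$ and $k=\binom{n}{2}-b_1$, the sum of $h$ over the final $k$ positions in $[0,\binom{n}{2}-1]$ dominates the sum over the window $[b,b_2-1]$, and you correctly observe that the one-edge-at-a-time swap fails because $h$ is not monotone (it resets to $1$ after each block $1,2,\ldots,2^{c-1}$ is completed). But you stop at naming the key inequality; you never prove it. The phrase ``I would prove this by direct computation'' followed by two alternative sketches (a case analysis on the cascade parameters $c_i,d_i$, or an injection) is a plan, not a proof. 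The claimed domination of the last window is genuinely nontrivial because a competing window can straddle a block boundary and pick up the large terminal value $2^{c-1}$ of an earlier block together with the start of the next one; one really does need to argue carefully using the fact that every block ends with a value strictly smaller than $2^{n-2}$ and the tails decay geometrically. Your assertion that ``equality never holds in Case~B'' is also unsupported as written: the observation that $G_1\cup G_2$ has the wrong number of $K_{r+1}$ components tells you what the lemma \emph{must} say if it is true, but it does not by itself establish strict inequality, which is part of what is to be proved. The paper sidesteps this entire computation by invoking Lemma~11 of \cite{KR19}, where the requisite comparison is established once and for all; your self-contained route is viable but would require actually carrying out the window-sum argument you outline.
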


\begin{proof}
Reasonably straightforward with the help of Lemma 11 of \cite{KR19}.
\end{proof}

\begin{main}
	For all $m,r\in \N$, write $m=a\binom{r+1}{2}+b$ with $0\le b < \binom{r+1}2$. If $G$ is a graph on $m$ edges with $\Delta(G)\leq r$, then 
	\begin{equation}
		\k(G) \leq \k(aK_{r+1} \cup \cC(b)),
	\end{equation}
	with equality if and only if (disregarding isolated vertices) $G=aK_{r+1}\cup \cC(b)$ or $G=aK_{r+1}\cup K_c\cup K_2$ (where $b = \binom{c}{2}+1$).
\end{main}

\begin{proof}
	We disregard isolated vertices because they do not affect the number of edges, maximum degree, or number of cliques. For $r=1$, the theorem is trivial as $mK_2$ is the only possible graph. For $r=2$, it is almost as trivial, as we have $\k(G)\leq m+a$. The graphs $G \in \cG(m,2)$ that achieve equality have $k_3(G) = a$ so are $G=aK_{r+1}\cup \cC(b)$ and, when $b=2$, $G=aK_3\cup 2K_2$.
	
	For $r \ge 3$, we use induction on $m$. The Kruskal-Katona Theorem (Proposition \ref{prop:kkt}) implies the theorem for $m \le \binom{r+1}2$, i.e. $a=0$. By Lemma \ref{lem:disco}, if $G$ is disconnected, then by induction on a component $C$ and $G[V(G)\setminus C]$, we have $\k(G) \leq \k(aK_{r+1} \cup \cC(b))$, with equality if and only if $G=aK_{r+1}\cup \cC(b)$ (or, if $d=1$, $G=aK_{r+1}\cup K_c\cup K_2$, where $b = \binom{c}2+d$). Henceforth we assume $G$ is connected.
	
	If $G$ contains a cluster $T$ of any of the following types, then we use a local move to show $G$ is not extremal:
\begin{enumerate}
\item $\k(G) < \k(G_T)$ and $e(B) \ge e(R)$: then consider the folding $G_T$ of $G$ at $T$. Let $m' = e(G_T)$ (which is at most $m$). Then $G_T\cup (m-m')K_2 \in \cG(m,r)$ has $\k(G_T\cup (m-m')K_2) \ge \k(G_T) > \k(G)$, so $G$ is not extremal.
\item $e(B) < e(R) \le r$: then by Corollary~\ref{cor:colexfold}, the colex folding $G'_T$ of $G$ at $T$ has at least as many cliques as $G$ and is
 disconnected. If $\k(G) = \k(G'_T) = f(m,r)$, then $G'_T = aK_{r+1} \cup \cC(b)$ (or, if $d=1$, $G'_T=aK_{r+1}\cup K_c\cup K_2$). This is impossible because
  $G$ was connected, so $G'_T$ cannot contain a $K_{r+1}$. Therefore $G$ is not extremal.
\item $e(B) < e(R)$ and $|T| \ge \frac{r-1}{2}$: then a partial folding of $G$ at $T$ shows $G$ is not extremal, by Lemma \ref{lem:oneedge}.
\end{enumerate}

	Otherwise, each cluster of $G$ has none of the above types, so has either
\begin{enumerate}
\item $\k(G) \ge \k(G_T)$, or
\item $e(R) \ge r+1$ and $|T| \le \frac{r-2}{2}$.
\end{enumerate}
In this case, Theorem \ref{thm:everycluster} shows $G$ is not extremal.
\end{proof}


\section{Open Problems} 
\label{sec:conclusion}

There is a broad class of similar problems where one determines 
\[
	\mex_{T}(m,F) = \max\setof{n_T(G)}{\text{ $G$ is a graph with $m$ edges not containing a copy of $F$}},
\]
where $n_T(G)$ is the number of copies of $T$ in $G$. Conjecture \ref{conj:allt} concerns $\mex_{K_t}(m,K_{1,r+1})$. There are many natural problems of this form. They are the edge analogues of the problems introduced by Alon and Shikhelman in \cite{AlonS}. They define 
\[
	\ex_{T}(n,F) = \max\setof{n_T(G)}{\text{ $G$ is a graph on $n$ vertices not containing a copy of $F$}}
\]
and solve many problems of this form.

After the preparation of this paper, Chakraborti and Chen \cite{CC20} announced a proof of Conjecture \ref{conj:allt}.


\section*{Acknowledgments}

We thank Stijn Cambie for helpful discussions that improved the paper.


\providecommand{\bysame}{\leavevmode\hbox to3em{\hrulefill}\thinspace}
\providecommand{\MR}{\relax\ifhmode\unskip\space\fi MR }
\providecommand{\MRhref}[2]{%
\href{http://www.ams.org/mathscinet-getitem?mr=#1}{#2}
}
\providecommand{\href}[2]{#2}


\begin{thebibliography}{99}

\bibitem{AlonS}
N.~Alon and C.~Shikhelman, \emph{Many {$T$} copies in {$H$}-free graphs},
Journal of Combinatorial Theory, Series B \textbf{121} (2016), 146 -- 172,
Fifty years of The Journal of Combinatorial Theory.

\bibitem{BB}
B.~Bollobas, \emph{{Modern Graph Theory}}, Graduate Texts in Mathematics,
Springer New York, 1998.

\bibitem{CC20}
D.~{Chakraborti} and D.~Q. {Chen}, \emph{{Many cliques with few edges and
bounded maximum degree}}, arXiv e-prints (2020), \arxiv{2003.07943}.

\bibitem{C19}
Z.~{Chase}, \emph{The maximum number of triangles in a graph of given maximum
degree}, Advances in Combinatorics \textbf{2020:10} (2020), 5 pp.

\bibitem{CR14}
J.~Cutler and A.~J. Radcliffe, \emph{The maximum number of complete subgraphs
in a graph with given maximum degree}, Journal of Combinatorial Theory, Series B \textbf{104}
(2014), 60--71.

\bibitem{CR2016}
\bysame, \emph{The maximum number of complete subgraphs of fixed size in a
graph with given maximum degree}, Journal of Graph Theory \textbf{84} (2017),
no.~2, 134--145.

\bibitem{CR14arxiv}
\bysame, \emph{The maximum number of complete subgraphs in a graph with given
maximum degree}, arXiv e-prints (2021), \arxiv{1306.1803v2}.

\bibitem{EG14}
J.~Engbers and D.~Galvin, \emph{Counting independent sets of a fixed size in
graphs with a given minimum degree}, Journal of Graph Theory \textbf{76} (2014),
no.~2, 149--168.

\bibitem{FFK88}
P.~Frankl, Z.~F{\"u}redi, and G.~Kalai, \emph{Shadows of colored complexes},
Mathematica Scandinavica \textbf{63} (1988), no.~2, 169--178.

\bibitem{Frohmader08}
A.~Frohmader, \emph{Face vectors of flag complexes}, Israel Journal of Mathematics.
\textbf{164} (2008), 153--164.

\bibitem{G11}
D.~Galvin, \emph{Two problems on independent sets in graphs}, Discrete Mathematics
\textbf{311} (2011), no.~20, 2105--2112.

\bibitem{GLS15}
W.~Gan, P.~S. Loh, and B.~Sudakov, \emph{Maximizing the number of independent
sets of a fixed size}, Combinatorics, Probability and Computing \textbf{24} (2015), no.~3,
521--527.

\bibitem{K68}
G.~Katona, \emph{A theorem of finite sets}, Theory of Graphs (Proceedings of the Colloquium Held at Tihany, Hungary, September 1966), Academic Press, New York, 1968, pp.~187--207.

\bibitem{KR19}
R.~{Kirsch} and A.~J. {Radcliffe}, \emph{{Many triangles with few edges}},
Electronic Journal of Combinatorics \textbf{26} (2019), no.~2, P2.36.

\bibitem{K63}
J.~B. Kruskal, \emph{The number of simplices in a complex}, Mathematical
optimization techniques, University of California Press, Berkeley, California, 1963,
pp.~251--278.

\bibitem{RU18}
A.~J. {Radcliffe} and A.~{Uzzell}, \emph{{Stability and Erd\H{o}s--Stone type
results for $F$-free graphs with a fixed number of edges}}, arXiv e-prints
(2018), \arxiv{1810.04746}.

\bibitem{Z49}
A.~A. Zykov, \emph{On some properties of linear complexes}, Matemati\u{c}eski\u{\i} Sbornik N.S.
\textbf{24(66)} (1949), 163--188.

\end{thebibliography}
\end{document}